\newtheorem{theorem}{Theorem}
\newtheorem{acknowledgement}[theorem]{Acknowledgement}
\newtheorem{lemma}[theorem]{Lemma}
\newtheorem{proposition}[theorem]{Proposition}
\newtheorem{remark}[theorem]{Remark}
\newenvironment{proof}[1][Proof]{\textbf{#1.} }{\ \rule{0.5em}{0.5em}}
\begin{document}

\title{Using the RD rational Arnoldi method for exponential integrators}
\author{Paolo Novati \\
Department of Pure and Applied Mathematics\\
University of Padova, Italy}
\maketitle

\begin{abstract}
In this paper we investigate some practical aspects concerning the use of
the Restricted-Denominator (RD) rational Arnoldi method for the computation
of the core functions of exponential integrators for parabolic problems. We
derive some useful a-posteriori bounds together with some hints for a
suitable implementation inside the integrators. Numerical experiments
arising from the discretization of sectorial operators are presented.
\end{abstract}

\section{Introduction}

For the solution of large stiff problems of the type%
\begin{equation}
u^{\prime }(t)=f(y(t))=Lu(t)+N(u(t)),  \label{pr}
\end{equation}%
where $L\in \mathbb{R}^{M\times M}$ arises from the discretization of
unbounded sectorial operators and $N$ is a nonlinear function, in recent
years much work has been done on the construction of exponential integrators
that might represent a promising alternative to classical solvers (see e.g. 
\cite{Minchev} or \cite{HO} for a comprehensive survey). As well known the
computation of the matrix exponential or related functions of matrices is at
the core of this kind of integrators. The main idea is to damp the stiffness
of the problem (assumed to be contained in $L$) on these computations so
that the integrator can be explicit.

Under the hypothesis that the functions of matrices involved are exactly
evaluated, the linear stability can be trivially achieved for both
Runge-Kutta and multistep based exponential integrators and hence highly
accurate and stable integrators can be constructed. On the other hand, the
main problem with this class of integrators is just the efficient
computation of such functions of matrices, so that, very few reliable codes
have been written (we remember the Rosenbrock type exponential integrators
presented in \cite{CO}, \cite{Holuse}, \cite{Nov}). For this reason many
authors are still doubtful about the potential of exponential integrators
with respect to classical implicit solvers even for semilinear problem of
type (\ref{pr}).

An exponential integrator requires at each time step the evaluation of a
certain number (depending on the accuracy) of functions of matrices of the
type $\varphi _{k}(hL)v$, where

\begin{eqnarray}
\varphi _{0}(h\lambda ) &=&\exp (h\lambda ),  \label{defi} \\
\varphi _{k+1}(h\lambda ) &=&\frac{\varphi _{k}(h\lambda )-\frac{1}{k!}}{%
h\lambda },\text{ for }k=0,1,2,...\text{ ,}  \notag
\end{eqnarray}%
being $h$ the time step. Actually this represents the general situation for
the Exponential Time Differencing methods, that is, the methods based on the
variation-of-constants formula; for Lawson's type method (also called
Integrating Factor methods) only the matrix exponential is involved. We
refer again to \cite{Minchev} and the reference therein for a background.

Among the existing techniques for the computation of functions of matrices
(we quote here the recent book of Higham \cite{Hig} for a survey), in this
context the Restricted-Denominator (RD) Rational Arnoldi algorithm
introduced independently in \cite{Vanh} and \cite{Morno} for the computation
of the matrix exponential seems to be an reliable approach. It is based on
the use of the so called RD rational forms, studied in \cite{Nors} for the
exponential function, 
\begin{equation*}
R_{i,j}(\lambda )=\frac{q_{i}(\lambda )}{(1-\delta \lambda )^{j}},\quad
\delta \in \mathbb{R}\mathbf{,}
\end{equation*}%
where $q_{i}$ is a polynomial of degree $\leq i.$ We refer again to \cite%
{Morno} for the basic references about the properties and the use of such
rational forms. While in the matrix case, the use of these approximants
requires the solution of linear systems with the matrix $(I-\delta L)$, as
shown in \cite{Nov} in the context of the solution of (\ref{pr}) when $L$ is
sectorial so typically sparse and well structured this linear algebra
drawback can be almost completely overtaken organizing suitably the
step-size control strategy and exploiting the properties of the RD Arnoldi
method concerning the choice of the parameter $\delta $. In other words the
number of linear systems to be solved can be drastically reduced with
respect to the total number of computations of functions of matrices
required by the integrator. Therefore the mesh independence property of the
method, that leads to a very fast convergence with respect to a standard
polynomial approach (see again \cite{Morno}), can be fully exploited for the
construction of competitive integrators.

A problem still open is that inside the integrator the rational Arnoldi
algorithm (responsible for most of the computational cost) have to be
supported by a robust and sharp error estimator. In the self-adjoint case
the problem has been treated in \cite{Mofi} where the author presents
effective a-posteriori error estimates, even in absence of information on
the location of the spectrum of $L$. Anyway, in the general case, when (\ref%
{pr}) arises for instance from the discretization of parabolic problems with
advection terms and/or non-zero boundary conditions the numerical range of $%
L $, that we denote by $F(L)$, may not reduce to a line segment. In this
sense the basic aim of this paper is to fill this gap providing error
estimates for the non-symmetric case using as few as possible information
about the location of $F(L)$. It is necessary to keep in mind that a
competitive code for (\ref{pr}) should also be able to update $L$
(interpreted as the Jacobian of $f$, \cite{Tok}, \cite{CO}) so that $F(L)$
is may be not fixed during the integration, and so it is important to reduce
as much as possible any pre-processing technique to estimate $F(L)$. In
particular assuming that $F(L)\subseteq \mathbb{C}^{-}$we shall provide
a-posteriori error estimates for the RD Arnoldi process using only
information about the angle of the sector containing $F(L)$, angle that is
typically independent of the sharpness of the discretization and hence
computable working in small dimension.

The paper is organized as follows. In Section 2 we present the basic idea of
the RD rational Arnoldi method and in Section 3 we derive some first general
error bounds based on the standard approaches. In Section 4, exploiting the
relation between the derivatives of the function $e^{1/z}$ and the Laguerre
polynomials extended to the complex plane, we derive some a-posteriori error
bounds. The problem of defining reliable a-priori bounds is investigated in
Section 5. Section 6 is devoted to the analysis of the generalized residual
as error estimator, that can be used to obtain information about the choice
of the parameter $\delta $ for the rational approximation. In Section 7 we
present some numerical examples arising from the discretization of a
one-dimensional advection-diffusion model. In Section 8 we provide some
hints about the use of the RD rational Arnoldi method inside an exponential
integrator with the aim of reducing as much as possible the number of
implicit computations of $(I-\delta L)^{-1}$. Finally, in Section 9 we
furnish a deeper analysis concerning the fast rate of convergence of the
method, that will provide further information about the optimal choice of
the parameter $\delta $.

\section{The RD rational Arnoldi method}

In what follows we denote by $\left\Vert \cdot \right\Vert $ the Euclidean
vector norm and its induced matrix norm. As already mentioned, the notation $%
F(L)$ indicates the \emph{numerical range} of $L$, that is, 
\begin{equation*}
F(L):=\left\{ \frac{x^{H}Lx}{x^{H}x},x\in \mathbb{C}^{M}\mathbf{\backslash }%
\left\{ 0\right\} \right\} ,
\end{equation*}%
while the spectrum of $L$ is denoted by $\sigma (L)$. The notation $\Pi _{m}$
indicates the space of the algebraic polynomials of degree $\leq m$.

Given $0\leq \theta <\frac{\pi }{2}$, let%
\begin{equation}
S_{\theta }=\left\{ \lambda :\left\vert \arg (-\lambda )\right\vert \leq
\theta \right\} \subset \mathbb{C}^{-}  \label{sect}
\end{equation}%
be the unbounded sector of the left half complex plane, symmetric with
respect to the real axis with vertex in $0$ and semiangle $\theta $. Let
moreover $\Gamma _{\theta }$ be the boundary of $S_{\theta }$. Throughout
the paper we assume that $F(L)\subset int(S_{\theta })$, the interior of $%
S_{\theta }$. Accordingly, $L$ is a so-called sectorial operator (see e.g. 
\cite{Ka} Chap. V, for a background).

Given a vector $v\in \mathbb{R}^{M}$, with $\left\Vert v\right\Vert =1$,
consider the problem of computing 
\begin{equation}
y^{(k)}=\varphi _{k}(hL)v,  \label{cpsi}
\end{equation}%
where $\varphi _{k}$ is defined by (\ref{defi}). The RD rational approach
seeks for approximations to $\varphi _{k}(h\lambda )$ of the type 
\begin{equation*}
R_{m-1,m-1}(\lambda )=\frac{p_{k,m-1}(\lambda )}{(1-\delta \lambda )^{m-1}},%
\mathbf{\quad }p_{k,m-1}(\lambda )\in \Pi _{m-1},\quad m\geq 1,
\end{equation*}%
where $\delta >0$ is a suitable parameter. Turning to the matrix case, $%
y^{(k)}$ is approximated by elements of the Krylov subspaces

\begin{equation*}
K_{m}(Z,v)=span\left\{ v,Zv,Z^{2}v,...,Z^{m-1}v\right\} ,\quad m\geq 1,
\end{equation*}%
with respect to $v$ and the matrix $Z$ defined by the transform 
\begin{equation*}
Z=(I-\delta L)^{-1}.
\end{equation*}%
In this sense the idea is to use a polynomial method to compute $%
y^{(k)}=f_{k}(Z)v$, where%
\begin{equation*}
f_{k}(z):=\varphi _{k}\left( \frac{h}{\delta }\left( 1-\frac{1}{z}\right)
\right)
\end{equation*}%
is singular at $0$.

For the construction of the subspaces $K_{m}(Z,v)$ we employ the classical
Arnoldi method. As is well known it generates an orthonormal sequence$\
\left\{ v_{j}\right\} _{j\geq 1}$, with $v_{1}=v$, such that $%
K_{m}(Z,v)=span\left\{ v_{1},v_{2},...,v_{m}\right\} $. Moreover, for every $%
m$, 
\begin{equation}
ZV_{m}=V_{m}H_{m}+h_{m+1,m}v_{m+1}e_{m}^{H},  \label{i1}
\end{equation}%
where $V_{m}=\left[ v_{1},v_{2},...,v_{m}\right] $, $H_{m}$ is upper
Hessenberg matrix with entries $h_{i,j}=v_{i}^{H}Zv_{j}$ and $e_{j}$ is the $%
j$-th vector of the canonical basis of \ $\mathbb{R}^{m}$.

The $m$-th RD-rational Arnoldi approximation to $y^{(k)}$ is defined as (see 
\cite{Knizh})%
\begin{equation}
y_{m}^{(k)}=V_{m}f_{k}(H_{m})e_{1}.  \label{kn}
\end{equation}%
It can be seen that 
\begin{equation}
y_{m}^{(k)}=\overline{p}_{k,m-1}(Z)v,  \label{int}
\end{equation}%
where $\overline{p}_{k,m-1}\in $ $\Pi _{m-1}$ interpolates, in the Hermite
sense, the function $f_{k}(z)$ in the eigenvalues of $H_{m}$ (see \cite%
{Saad2}).

As mentioned in the Introduction this technique has been introduced
independently in \cite{Vanh} and \cite{Morno}. Anyway, the idea of using
rational Krylov approximations to matrix functions was originally introduced
in \cite{Druki}. More recently this approach has been extended to the case
of multiple poles and is commonly referred to as RKS (Rational Krylov
Subspace) approximation (see \cite{knisi}, \cite{posi}, \cite{BR}).

\section{General error bounds}

Before stating a general error bound for the method, we need to locate $F(Z)$%
. Consider the function $\chi (\lambda )=(1-\delta \lambda )^{-1}$. Denoting
by $D_{1/2,1/2}$ the disk centered in $1/2$ with radius $1/2$, let%
\begin{equation}
G_{\theta }=\{z:z=\chi (\lambda ),\lambda \in S_{\theta }\}\subseteq
D_{1/2,1/2}.  \label{gt}
\end{equation}%
Its boundary, $\Sigma _{\theta }$, is made by two circular arcs meeting with
angle $2\theta $ at $0$ and $1$. Regarding the field of values of $Z$, $F(Z)$%
, we can state the following result that will be used frequently throughout
the paper.

\begin{proposition}
\label{pint}If $F(L)\subset int(S_{\theta })$ then $F(Z)\subset
int(G_{\theta }).$
\end{proposition}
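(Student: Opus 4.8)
My plan is to exploit the fact that $\chi (\lambda )=(1-\delta \lambda )^{-1}$ is a M\"{o}bius transformation, realized as the inversion $\mu \mapsto \mu ^{-1}$ composed with the affine map $\lambda \mapsto 1-\delta \lambda $. The field of values transforms \emph{exactly} under affine maps, so the only genuine difficulty is to control $F(\cdot )$ under inversion. To linearize the sector I would write the open sector $int(S_{\theta })$ as the intersection $H_{1}\cap H_{2}$ of two open half-planes whose boundary lines pass through the origin, at the angles $\pm \theta $ off the negative real axis. Since $F(L)\subset int(S_{\theta })$, this gives $F(L)\subset H_{1}$ and $F(L)\subset H_{2}$ separately, so I can treat one half-plane at a time and intersect at the end.

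The key step, and the one I expect to be the main obstacle, is an inversion lemma for the numerical range: if $B$ satisfies $\tfrac{1}{2}(B+B^{H})\succeq cI$ for some $c>0$ (equivalently $F(B)$ lies in the half-plane $\{\operatorname{Re}z\geq c\}$, which avoids $0$), then $F(B^{-1})\subset \overline{D_{1/(2c),1/(2c)}}$. I would prove this directly: for a unit vector $x$ put $z=B^{-1}x$, so that $w:=x^{H}B^{-1}x=(Bz)^{H}z=z^{H}B^{H}z$. Then $\operatorname{Re}w=z^{H}\tfrac{1}{2}(B+B^{H})z\geq c\Vert z\Vert ^{2}$, while Cauchy--Schwarz gives $|w|=|x^{H}z|\leq \Vert z\Vert $. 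Combining these yields $\operatorname{Re}w\geq c|w|^{2}$, which is precisely the inequality defining the disk $D_{1/(2c),1/(2c)}$. A general half-plane not containing $0$ is brought to this normal form by a rotation $z\mapsto e^{-i\alpha }z$, under which both $F(B)$ and the target disk rotate consistently.

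Finally I would assemble the pieces. Writing $B=I-\delta L$, the affine rule gives $F(B)=1-\delta F(L)$, so $F(L)\subset H_{i}$ becomes $F(B)\subset 1-\delta H_{i}$, again a half-plane; and because the pole $1/\delta $ lies in the open sector opposite to $S_{\theta }$ (the wedge around the positive real axis), it is in neither $\overline{H_{1}}$ nor $\overline{H_{2}}$ once $\theta >0$, so $0$ lies strictly outside each $1-\delta H_{i}$. The inversion lemma then yields $F(Z)=F(B^{-1})\subset \chi (H_{1})$ and $F(Z)\subset \chi (H_{2})$. Since $\chi $ is a bijection of the Riemann sphere it preserves intersections and maps interiors to interiors, whence $\chi (H_{1})\cap \chi (H_{2})=\chi (H_{1}\cap H_{2})=\chi (int(S_{\theta }))=int(G_{\theta })$, giving $F(Z)\subset int(G_{\theta })$. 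To secure the \emph{strict} containment I would use that $F(L)$ is compact (Toeplitz--Hausdorff) and contained in the open set $int(S_{\theta })$; this separation lets me take each $c>0$ with a positive margin, so the image disks can be taken open and the final intersection lands in the interior. The only subtle point throughout is the inversion lemma; everything else is the routine bookkeeping of affine images and the conformal bijectivity of $\chi $.
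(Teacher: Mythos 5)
Your argument is correct in substance but takes a genuinely different route from the paper. The paper argues by contradiction directly on the boundary $\Sigma _{\theta }$: it supposes a unit vector $y$ with $y^{H}Zy=(1-\delta \lambda )^{-1}$ for some $\lambda \in \Gamma _{\theta }$, sets $x=Zy$, and plays the identity $\left\Vert x\right\Vert ^{2}=(1-\delta \lambda )^{-1}(1-\delta \mu )^{-1}$ (with $\mu \in F(L)$) against the Cauchy--Schwarz bound $\left\Vert x\right\Vert \left\vert 1-\delta \lambda \right\vert \geq 1$, finishing with connectedness of $F(Z)$. You instead split $int(S_{\theta })$ into two half-planes and isolate a clean, reusable inversion lemma, $\operatorname{Re}F(B)\geq c>0\Rightarrow F(B^{-1})\subset \overline{D_{1/(2c),1/(2c)}}$; your proof of that lemma ($w=z^{H}B^{H}z$, $\operatorname{Re}w\geq c\left\Vert z\right\Vert ^{2}$, $\left\vert w\right\vert \leq \left\Vert z\right\Vert$) is the same Cauchy--Schwarz mechanism the paper uses, but packaged so that the geometry becomes transparent and the connectedness step is unnecessary. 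One point you should tighten: the compactness/margin argument does not by itself land you in the open lens, because every image disk --- the shrunk ones included --- passes through $0=\chi (\infty )$, and the shrunk disk is internally \emph{tangent} to the original one there; so a smaller closed disk is not contained in the larger open disk, and the vertex $0$ of $G_{\theta }$ is not excluded by shrinking. It is, however, excluded by your own computation: $\operatorname{Re}w\geq c\left\Vert z\right\Vert ^{2}>0$ since $z=B^{-1}x\neq 0$, hence $0\notin F(B^{-1})$. With that observation (the margin disposes of every other boundary point, including the vertex $1=\chi (0)$), the containment $F(Z)\subset int(G_{\theta })$ is complete.
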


\begin{proof}
Obviously $\sigma (Z)=\chi (\sigma (L))$, so $F(Z)$ cannot lie entirely
outside $G_{\theta }$. Now assume that there exists $\lambda \in \Gamma
_{\theta }$ such that $\chi (\lambda )\in F(Z)$, that is, $F(Z)\cap \Sigma
_{\theta }\neq \varnothing $. Hence, there exists $y\in \mathbb{C}^{M}$, $%
\left\Vert y\right\Vert =1$, such that%
\begin{equation}
y^{H}\left( I-\delta L\right) ^{-1}y=\frac{1}{1-\delta \lambda }.
\label{rel1}
\end{equation}%
Defining $x:=\left( I-\delta L\right) ^{-1}y$ we easily obtain%
\begin{equation*}
x^{H}\left( I-\delta L^{T}\right) x=\frac{1}{1-\delta \lambda },
\end{equation*}%
and hence%
\begin{equation}
1-\delta \frac{x^{H}L^{T}x}{x^{H}x}=\frac{1}{\left( 1-\delta \lambda \right)
\left\Vert x\right\Vert ^{2}}.  \notag
\end{equation}%
By (\ref{rel1}) we have%
\begin{equation}
\left\Vert x\right\Vert \left\vert 1-\delta \lambda \right\vert \geq 1.
\label{frel}
\end{equation}%
Now let us define $\mu :=\frac{x^{H}L^{T}x}{x^{H}x}\in F(L)$. We have%
\begin{equation}
\left\Vert x\right\Vert ^{2}=\left( 1-\delta \lambda \right) ^{-1}\left(
1-\delta \mu \right) ^{-1},  \label{xn}
\end{equation}%
and hence%
\begin{equation*}
\func{Im}\left( \left( 1-\delta \lambda \right) ^{-1}\left( 1-\delta \mu
\right) ^{-1}\right) =0,
\end{equation*}%
that implies%
\begin{equation}
\frac{\func{Im}\left( \left( 1-\delta \lambda \right) ^{-1}\right) }{\func{Re%
}\left( \left( 1-\delta \lambda \right) ^{-1}\right) }=-\frac{\func{Im}%
\left( \left( 1-\delta \mu \right) ^{-1}\right) }{\func{Re}\left( \left(
1-\delta \mu \right) ^{-1}\right) }.  \label{cmp}
\end{equation}%
Now since $\left( 1-\delta \mu \right) ^{-1}\in int(G_{\theta })$ and $%
\left( 1-\delta \lambda \right) ^{-1}\in \Sigma _{\theta }$, by (\ref{cmp})
it must be $\func{Re}\left( \left( 1-\delta \lambda \right) ^{-1}\right) >%
\func{Re}\left( \left( 1-\delta \mu \right) ^{-1}\right) $ and $\left\vert 
\func{Im}\left( \left( 1-\delta \lambda \right) ^{-1}\right) \right\vert
>\left\vert \func{Im}\left( \left( 1-\delta \mu \right) ^{-1}\right)
\right\vert $ so that 
\begin{equation*}
\left\vert 1-\delta \mu \right\vert ^{-1}<\left\vert 1-\delta \lambda
\right\vert ^{-1}.
\end{equation*}%
Using this relation, by (\ref{xn}) we finally have%
\begin{equation*}
\left\Vert x\right\Vert ^{2}\left\vert 1-\delta \lambda \right\vert ^{2}<1,
\end{equation*}%
that contradicts (\ref{frel}). Since the field of values is connected the
proof is complete.
\end{proof}

\begin{remark}
\label{rem1}In order to provide information about the geometry of $F(Z)$, it
is worth referring to\ \cite{Zac} Theorem 5.2 in which the author proves
that if $L$ is an invertible matrix then%
\begin{equation*}
\lim_{s\rightarrow \infty }\left( \frac{1}{F((L-sI)^{-1})}+s\right) =F(L).
\end{equation*}%
Taking $\delta =1/s$, we have that for small values of $\delta $%
\begin{equation*}
F((I-\delta L)^{-1})\approx \frac{1}{1-\delta F(L)}.
\end{equation*}
\end{remark}

Going back to our method, the corresponding error $%
E_{k,m}:=y^{(k)}-y_{m}^{(k)}$ can be expressed and bounded in many ways (we
quote here the recent papers \cite{BR} and \cite{MRD} for a background on
the error estimates for both polynomial and rational Arnoldi approximation
to matrix functions). The following proposition states a general result.

\begin{proposition}
\label{stp}Let $G\subseteq D_{1/2,1/2}$ be a compact such that $F(Z)\subset
int(G)$ and whose boundary $\Sigma $ is a rectifiable Jordan curve. For
every $p_{m-1}\in \Pi _{m-1}$%
\begin{equation}
\left\Vert E_{k,m}\right\Vert \leq \frac{1}{2\pi }\int\nolimits_{\Sigma }%
\frac{\left\vert f_{k}(z)-p_{m-1}(z)\right\vert }{dist(z,F(Z))}\left\Vert
v-\left( zI-Z\right) V_{m}\left( zI-H_{m}\right) ^{-1}e_{1}\right\Vert
\left\vert dz\right\vert .  \label{ge}
\end{equation}
\end{proposition}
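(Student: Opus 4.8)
The plan is to base the bound on the Riesz--Dunford (Cauchy) integral representation of the two matrix functions whose difference defines $E_{k,m}$. Since $V_m$ has orthonormal columns and $H_m=V_m^{H}ZV_m$, the numerical range $F(H_m)$ is contained in $F(Z)$, so both $F(Z)$ and $F(H_m)$ lie in $int(G)$. As $f_k$ is analytic in a neighbourhood of $G$ (its only finite singularity being the essential one at $0$, which sits at a corner of $\Sigma$), I would write
\begin{equation*}
f_k(Z)v=\frac{1}{2\pi i}\int_{\Sigma}f_k(z)(zI-Z)^{-1}v\,dz,\qquad V_mf_k(H_m)e_1=\frac{1}{2\pi i}\int_{\Sigma}f_k(z)V_m(zI-H_m)^{-1}e_1\,dz,
\end{equation*}
and subtract to obtain the error as a single contour integral
\begin{equation*}
E_{k,m}=\frac{1}{2\pi i}\int_{\Sigma}f_k(z)\,w(z)\,dz,\qquad w(z):=(zI-Z)^{-1}v-V_m(zI-H_m)^{-1}e_1 .
\end{equation*}

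The next step exploits the exactness of the polynomial Krylov method. From the Arnoldi relation (\ref{i1}) one shows by induction that $Z^{j}v=V_mH_m^{j}e_1$ for $0\le j\le m-1$, since the Hessenberg structure kills the rank-one remainder term as long as $j\le m-2$. Consequently $p_{m-1}(Z)v=V_mp_{m-1}(H_m)e_1$ for every $p_{m-1}\in\Pi_{m-1}$. Representing this identity through the same Cauchy integral (the polynomial being entire) gives $\int_{\Sigma}p_{m-1}(z)w(z)\,dz=0$, so subtracting this vanishing quantity replaces $f_k$ by $f_k-p_{m-1}$ without altering the value of the integral:
\begin{equation*}
E_{k,m}=\frac{1}{2\pi i}\int_{\Sigma}\bigl(f_k(z)-p_{m-1}(z)\bigr)\,w(z)\,dz .
\end{equation*}

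It then remains to control $\left\Vert w(z)\right\Vert $. Using $v=V_me_1$ and applying $(zI-Z)$ yields $(zI-Z)w(z)=v-(zI-Z)V_m(zI-H_m)^{-1}e_1$, whence
\begin{equation*}
\left\Vert w(z)\right\Vert \le\left\Vert (zI-Z)^{-1}\right\Vert \,\bigl\Vert v-(zI-Z)V_m(zI-H_m)^{-1}e_1\bigr\Vert .
\end{equation*}
For $z\in\Sigma$ we have $z\notin F(Z)$, and the standard numerical-range resolvent estimate $\left\Vert (zI-Z)^{-1}\right\Vert \le 1/dist(z,F(Z))$ applies: indeed, for any unit vector $u$, $\left\Vert (zI-Z)u\right\Vert \ge |z-u^{H}Zu|\ge dist(z,F(Z))$ because $u^{H}Zu\in F(Z)$. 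Inserting this bound, bounding the modulus of the integrand pointwise and passing the Euclidean norm inside the integral produces exactly (\ref{ge}).

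The main obstacle I anticipate is not the algebraic manipulation but the analytic justification of the Cauchy representation at the corner $z=0$, where $f_k$ carries its essential singularity and where $\Sigma$ touches $\partial D_{1/2,1/2}$. One must verify that the $\varphi_k$ functions decay to zero as their argument tends to $-\infty$ along the sector, equivalently that $f_k(z)\to 0$ as $z\to 0$ along $\Sigma$, so that the integrand is integrable there and the functional-calculus representation is legitimate (if necessary as a limit of contours retreating from the corner). Everything else—the Krylov exactness and the resolvent estimate—is routine once this point is settled.
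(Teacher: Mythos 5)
Your proposal is correct and follows essentially the same route as the paper: Arnoldi exactness for polynomials of degree $\le m-1$, the Dunford--Taylor representation of the error with $f_k$ replaced by $f_k-p_{m-1}$, factoring out $(zI-Z)^{-1}$ to expose $v-(zI-Z)V_m(zI-H_m)^{-1}e_1$, and the numerical-range resolvent bound $\Vert (zI-Z)^{-1}\Vert \le 1/dist(z,F(Z))$. Your extra remark about justifying the contour integral near the essential singularity at $z=0$ (where $f_k(z)\to 0$ within the sector) is a legitimate point of care that the paper passes over silently, but it does not change the argument.
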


\begin{proof}
Using the properties of the Arnoldi algorithm we know that for every $%
p_{m-1}\in \Pi _{m-1}$, 
\begin{equation*}
V_{m}p_{m-1}(H_{m})e_{1}=p_{m-1}(Z)v.
\end{equation*}%
Hence from this identity it follows that, for $m\geq 1$ 
\begin{equation}
E_{k,m}=f_{k}(Z)v-p_{m-1}(Z)v-V_{m}(f_{k}(H_{m})-p_{m-1}(H_{m}))e_{1}.
\label{fe}
\end{equation}%
Now since $F(H_{m})\subseteq F(Z)$ we can write (\ref{fe}) in the
Dunford-Taylor integral form%
\begin{equation*}
E_{k,m}=\frac{1}{2\pi i}\int\nolimits_{\Sigma }\left(
f_{k}(z)-p_{m-1}(z)\right) \left[ \left( zI-Z\right) ^{-1}v-V_{m}\left(
zI-H_{m}\right) ^{-1}e_{1}\right] dz.
\end{equation*}%
Collecting $\left( zI-Z\right) ^{-1}$ and using (see \cite{Spik})%
\begin{equation*}
\left\Vert \left( zI-Z\right) ^{-1}\right\Vert \leq \frac{1}{dist(z,F(Z))},
\end{equation*}%
we prove (\ref{ge}).
\end{proof}

Now since%
\begin{equation*}
v-\left( zI-Z\right) V_{m}\left( zI-H_{m}\right) ^{-1}e_{1}=\frac{q_{m}(Z)v}{%
q_{m}(z)},
\end{equation*}%
where%
\begin{equation*}
q_{m}(z)=\det (zI-H_{m}),
\end{equation*}%
(see \cite{MN1}), any bound for $\left\Vert q_{m}(Z)v\right\Vert /\left\vert
q_{m}(z)\right\vert $ and any choice for $G$ and $p_{m-1}$ leads to a bound
for $\left\Vert E_{k,m}\right\Vert $. This technique has been used for
instance in \cite{Morno} and \cite{LubHoch}. In particular in \cite{Morno}
the authors use the relation%
\begin{equation}
\left\Vert q_{m}(Z)v\right\Vert =\prod\limits_{j=1}^{m}h_{j+1,j},
\label{try}
\end{equation}%
and the inequality%
\begin{equation}
\left\vert q_{m}(z)\right\vert \geq dist(z,F(Z))^{m}.  \label{lb}
\end{equation}

Going back to our situation, the main problem is that if we simply assume
that $F(L)\subset S_{\theta }$ (in other words $F(L)$ arbitrarily large) we
have that $dist(z,F(Z))\rightarrow 0$ as $z\rightarrow 0$ ($\func{Re}\lambda
\rightarrow -\infty $) because we have to consider the singularity of $f_{k}$
at $0$. Therefore using a lower bound like (\ref{lb}) (but the situation
remains true even for other approaches (cf. \cite{LubHoch})) terms of the
type $f_{k}(z)/z^{m+1}$ would appear in (\ref{ge}). In the exponential case (%
$k=0$) this is not a problem because $f_{0}(z)/z^{m+1}\rightarrow 0$ for $%
z\rightarrow 0$, but for $k>0$ the situation changes completely since 
\begin{equation*}
\frac{f_{k}(z)}{z^{m+1}}\approx \frac{\delta }{h(k-1)!}\frac{1}{z^{m}}
\end{equation*}%
for $z\rightarrow 0$.

Because of the difficulties just explained, our approach for deriving error
bounds is not based on the use of the Cauchy integral formula. Exploiting
the interpolatory nature of the standard Arnoldi method, we notice, as
pointed out also in \cite{Eier}, that the error can be expressed in the form 
\begin{equation}
E_{k,m}=g_{k,m}(Z)q_{m}(Z)v,  \label{erapr}
\end{equation}%
where (cf. (\ref{int}))%
\begin{equation}
g_{k,m}(z):=\frac{f_{k}(z)-\overline{p}_{k,m-1}(z)}{\det (zI-H_{m})}.
\label{gkm}
\end{equation}%
In \cite{Eier} this relationship is used as the basis for the construction
of restarted methods for the computation of matrix functions.

We can state the following basic result that will be used throughout the
paper and that allows to overcome the difficulties of working with formula (%
\ref{ge}).

\begin{proposition}
\label{pg}Let $F(L)\subset S_{\theta }$ and let $\tau :=h/\delta $. Then%
\begin{equation}
\left\Vert E_{k,m}\right\Vert \leq K\frac{1}{\tau ^{k}(m+k)!}\max_{z\in
G_{\theta }}\left\vert \frac{d^{m+k}}{dz^{m+k}}f_{0}(z)z^{k}\right\vert
\prod\nolimits_{i=1}^{m}h_{i+1,i},  \label{erb}
\end{equation}%
where $2\leq K\leq 11.08$. In the symmetric case we can take $K=1$.
\end{proposition}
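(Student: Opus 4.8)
The plan is to bound the error using the representation $E_{k,m}=g_{k,m}(Z)q_m(Z)v$ from \eqref{erapr}, avoiding the Cauchy integral approach that fails at the singularity of $f_k$. First I would control the two factors separately: the norm $\left\Vert q_m(Z)v\right\Vert$ is exactly $\prod_{i=1}^m h_{i+1,i}$ by \eqref{try}, so the product term in \eqref{erb} is already accounted for. The remaining work is to bound $\left\Vert g_{k,m}(Z)\right\Vert$, where $g_{k,m}$ is the divided-difference-type function in \eqref{gkm}. Since $F(Z)\subset int(G_\theta)\subseteq D_{1/2,1/2}$ by Proposition~\ref{pint}, I would invoke a field-of-values bound for analytic functions of a matrix: there is a constant $C_\theta$ (the Crouzeix--Palencia constant, giving $C_\theta\le 11.08$ in general and $C_\theta=1$ in the symmetric/normal case) with $\left\Vert g_{k,m}(Z)\right\Vert \le C_\theta \max_{z\in G_\theta}\left\vert g_{k,m}(z)\right\vert$. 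This is precisely the source of the constant $K$ with $2\le K\le 11.08$.

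Next I would estimate $\max_{z\in G_\theta}\left\vert g_{k,m}(z)\right\vert$. Because $\overline p_{k,m-1}$ is the Hermite interpolant of $f_k$ at the eigenvalues of $H_m$ (the roots of $q_m$), and these lie in $F(H_m)\subseteq F(Z)\subset int(G_\theta)$, the function $g_{k,m}(z)=(f_k(z)-\overline p_{k,m-1}(z))/q_m(z)$ is an $m$-th order divided difference of $f_k$. The standard Hermite--Genocchi / contour representation then expresses this divided difference as an integral of $f_k^{(m)}$ over the convex hull of the interpolation nodes, yielding a bound of the form $\left\vert g_{k,m}(z)\right\vert \le \tfrac{1}{m!}\max_{G_\theta}\left\vert f_k^{(m)}\right\vert$ (up to the usual combinatorial normalization). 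So the task reduces to estimating the $m$-th derivative of $f_k$ on $G_\theta$.

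The main obstacle is converting derivatives of $f_k$ into the stated quantity $\tau^{-k}(m+k)!^{-1}\max_{G_\theta}\bigl\vert \tfrac{d^{m+k}}{dz^{m+k}}(f_0(z)z^k)\bigr\vert$. Recall $f_k(z)=\varphi_k\!\bigl(\tfrac{h}{\delta}(1-1/z)\bigr)$ with $\tau=h/\delta$; the $\varphi_k$ are built from the exponential by the recursion \eqref{defi}. I would use the integral/recursive structure of the $\varphi_k$ to write $f_k$ in terms of $f_0(z)=\exp(\tau(1-1/z))$ multiplied by powers of $z$, so that the $m$-th derivative of $f_k$ is expressed through higher derivatives of $f_0(z)z^k$. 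Concretely, I expect an identity relating $\tfrac{d^m}{dz^m}f_k(z)$ to $\tau^{-k}\tfrac{d^{m+k}}{dz^{m+k}}\bigl(f_0(z)z^k\bigr)$ (possibly after recognizing that the $\varphi_k$-transform raises the differentiation order by $k$ and divides by $\tau^k$); the factorial $(m+k)!$ in place of $m!$ is exactly what appears when the derivative count is shifted from $m$ to $m+k$. Carefully tracking this shift in both the factorial normalization and the power of $\tau$ is where the delicate bookkeeping lies, and I would verify it by checking the base case $k=0$ against the Hermite-interpolation bound before treating general $k$.
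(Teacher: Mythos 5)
Your setup is the same as the paper's: you use $E_{k,m}=g_{k,m}(Z)q_m(Z)v$, identify $\left\Vert q_m(Z)v\right\Vert=\prod_{i=1}^m h_{i+1,i}$, and invoke Crouzeix's field-of-values bound for $\left\Vert g_{k,m}(Z)\right\Vert$ (the paper applies it on $F(Z)$ and only then enlarges to $G_\theta$, but that is cosmetic). The gap is in the central step. You treat $g_{k,m}$ as an order-$m$ divided difference of $f_k$, bound it by $\frac{1}{m!}\max|f_k^{(m)}|$, and then hope for an identity converting $\frac{d^m}{dz^m}f_k(z)$ into $\tau^{-k}\frac{m!}{(m+k)!}\frac{d^{m+k}}{dz^{m+k}}\bigl(f_0(z)z^k\bigr)$. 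No such identity exists. Already for $k=1$, $m=0$ it would assert $f_1(z)=\tau^{-1}(f_0(z)z)'$; evaluating at $z=1$ (where $f_0(1)=1$, $f_0'(1)=\tau$) gives $f_1(1)=\varphi_1(0)=1$ on the left but $1+1/\tau$ on the right. The factors $\tau^{-k}$ and $(m+k)!$ in \eqref{erb} do not come from shifting a derivative count; they come from $k$ \emph{extra interpolation nodes}.

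The missing idea is the algebraic identity \eqref{fk}: by induction, $f_k(z)=\bigl(f_0(z)z^k-s_{k-1}(z)z\bigr)/\bigl(\tau^k(z-1)^k\bigr)$ with $s_{k-1}\in\Pi_{k-1}$. Substituting this into $g_{k,m}$ puts $\tau^k(z-1)^k q_m(z)$ in the denominator, and one observes that $\tau^k(z-1)^k\overline p_{k,m-1}(z)\in\Pi_{m+k-1}$ Hermite-interpolates the \emph{entire} function $f_0(z)z^k-s_{k-1}(z)z$ at the $m$ eigenvalues of $H_m$ and at $z=1$ with multiplicity $k$. Hence $g_{k,m}(z)$ is a single divided difference of order $m+k$ of that function, and Hermite--Genocchi gives the bound $\frac{1}{\tau^k(m+k)!}\max\bigl\vert\frac{d^{m+k}}{d\xi^{m+k}}\bigl(f_0(\xi)\xi^k\bigr)\bigr\vert$ over the convex hull of $\{z,\sigma(H_m),1\}\subset G_\theta$ (the polynomial $s_{k-1}(\xi)\xi$, of degree $k<m+k$, disappears under the $(m+k)$-th derivative). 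Without this reorganization your argument stalls at bounding $\max_{G_\theta}|f_k^{(m)}|$, a quantity that is not controlled by the right-hand side of \eqref{erb}, so the proof as proposed does not go through.
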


\begin{proof}
By \cite{Cro} we know that%
\begin{equation*}
\left\Vert g_{k,m}(Z)\right\Vert \leq K\max_{z\in F(Z)}\left\vert
g_{k,m}(z)\right\vert ,
\end{equation*}%
and hence by (\ref{try}) and (\ref{erapr}) 
\begin{equation*}
\left\Vert E_{k,m}\right\Vert \leq K\max_{z\in F(Z)}\left\vert
g_{k,m}(z)\right\vert \prod\nolimits_{i=1}^{m}h_{i+1,i}.
\end{equation*}%
Now, by induction one proves that for $k\geq 1$ 
\begin{equation}
f_{k}(z)=\frac{f_{0}(z)z^{k}-s_{k-1}(z)z}{\tau ^{k}(z-1)^{k}},  \label{fk}
\end{equation}%
where $s_{0}(z)=1$ and%
\begin{equation*}
s_{k}(z)=s_{k-1}(z)z+\frac{\tau ^{k}(z-1)^{k}}{k!}\in \Pi _{k}\text{ for }%
k\geq 1.
\end{equation*}%
Putting (\ref{fk}) in (\ref{gkm}) we obtain%
\begin{equation*}
g_{k,m}(z)=\frac{f_{0}(z)z^{k}-s_{k-1}(z)z-\tau ^{k}(z-1)^{k}\overline{p}%
_{k,m-1}(z)}{\tau ^{k}(z-1)^{k}\det (zI-H_{m})}.
\end{equation*}%
Now, the polynomial $\tau ^{k}(z-1)^{k}\overline{p}_{k,m-1}(z)\in
\prod\nolimits_{m+k-1}$ interpolates in the Hermite sense the function $%
f_{0}(z)z^{k}-s_{k-1}(z)z$ in the eigenvalues of $H_{m}$ and in $z=1$.
Henceforth $g_{k,m}(z)$ is a divided difference that can be bounded using
the Hermite-Genocchi formula (see e.g. \cite{deb}), so that 
\begin{equation*}
\left\vert g_{k,m}(z)\right\vert \leq \frac{1}{\tau ^{k}(m+k)!}\max_{\xi \in
co(\left\{ z,\sigma (H_{m}),1\right\} )}\left\vert \frac{d^{m+k}}{d\xi ^{m+k}%
}f_{0}(\xi )\xi ^{k}\right\vert ,
\end{equation*}%
where $co(\left\{ z,\sigma (H_{m}),1\right\} $ denotes the convex hull of
the point set given by $z$, $\sigma (H_{m})$ and $1$. Since $\sigma
(H_{m})\subset F(Z)$, and $F(Z)\subset G_{\theta }$ by Proposition \ref{pint}%
, the result follows.
\end{proof}

\section{A posteriori error estimates}

By (\ref{erb}), in order to provide a-posteriori error estimates we just
need to study the derivatives of the function $f_{0}(z)z^{k}$. We need to
introduce the generalized Laguerre polynomials, defined by 
\begin{equation*}
L_{n}^{(\alpha )}(z)=\sum\limits_{j=0}^{n}(-1)^{j}\binom{n+\alpha }{n-j}%
\frac{z^{j}}{j!}.
\end{equation*}%
We can state the following result.

\begin{lemma}
Let $\tau =\frac{h}{\delta }$. For $m\geq 1$ 
\begin{equation}
\frac{1}{\tau ^{k}(m+k)!}\frac{d^{m+k}}{dz^{m+k}}f_{0}(z)z^{k}=\frac{%
(-1)^{m+1}\tau }{z^{m+k+1}}f_{0}(z)\frac{(m-1)!}{(m+k)!}L_{m-1}^{(k+1)}(%
\frac{\tau }{z}).  \label{ede}
\end{equation}
\end{lemma}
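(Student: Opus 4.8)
The plan is to reduce the claim to a statement purely about the exponential and then to exploit the ``inversion'' symmetry of $z\mapsto 1/z$. First I would record that $f_0(z)=\varphi_0(\tau(1-1/z))=e^{\tau}e^{-\tau/z}$, so that after clearing the constants $\tfrac{1}{\tau^k(m+k)!}$ and $e^{\tau}$ from both sides, the identity (\ref{ede}) is equivalent to the single ``core'' identity
\begin{equation*}
\frac{d^{m+k}}{dz^{m+k}}\left[z^k e^{-\tau/z}\right]=\frac{(-1)^{m+1}\tau^{k+1}(m-1)!}{z^{m+k+1}}\,e^{-\tau/z}\,L_{m-1}^{(k+1)}\!\left(\frac{\tau}{z}\right).
\end{equation*}
Everything then concerns the high-order derivatives of $e^{-\tau/z}$, which is exactly the ``$e^{1/z}$ versus Laguerre'' link advertised in the Introduction.

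The key device I would use is the elementary inversion formula
\begin{equation*}
\frac{d^{n}}{dz^{n}}\left[z^{n-1}g(1/z)\right]=\frac{(-1)^{n}}{z^{n+1}}\,g^{(n)}(1/z),
\end{equation*}
valid for any smooth $g$. I would prove it by induction on $n$ after passing to the variable $u=1/z$: writing $A:=u^{2}\frac{d}{du}$ one has $\frac{d}{dz}=-A$, and the statement becomes $A^{n}\!\left[u^{1-n}h\right]=u^{n+1}h^{(n)}$ for every $h$, which follows in one line from the product rule and the Leibniz rule. The crucial, slightly delicate, step is the bookkeeping that makes this applicable: one must cast the target in the exact shape $z^{n-1}g(1/z)$. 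Taking $n=m+k$ and $g(y):=y^{m-1}e^{-\tau y}$ gives $z^{n-1}g(1/z)=z^{m+k-1}z^{1-m}e^{-\tau/z}=z^{k}e^{-\tau/z}$, so the inversion formula turns the left-hand side of the core identity into $\frac{(-1)^{m+k}}{z^{m+k+1}}\,g^{(m+k)}(1/z)$.

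It then remains to evaluate $g^{(m+k)}$, the $(m+k)$-th derivative of $y^{m-1}e^{-\tau y}$, and this is where the Laguerre polynomials enter. Since I am differentiating a degree-$(m-1)$ polynomial times $e^{-\tau y}$ more than $m-1$ times, I would simply apply the Leibniz rule, pull out the factor $(-1)^{k+1}\tau^{k+1}(m-1)!\,e^{-\tau y}$, reindex the finite sum, and match it term by term against the defining series of $L_{m-1}^{(k+1)}$ given just above the statement (the binomial coefficient $\binom{m+k}{m-1-\ell}$ appears with exactly the parameter $\alpha=k+1$). This yields $g^{(m+k)}(y)=(-1)^{k+1}\tau^{k+1}(m-1)!\,e^{-\tau y}L_{m-1}^{(k+1)}(\tau y)$, and substituting $y=1/z$ and collecting the signs $(-1)^{m+k}(-1)^{k+1}=(-1)^{m+1}$ reproduces the core identity, hence (\ref{ede}).

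I expect the only genuine obstacle to be the combinatorial and sign bookkeeping: choosing the shift $g(y)=y^{m-1}e^{-\tau y}$ so that the exponent of $z$ equals $n-1$, and then keeping the factorials, the powers of $\tau$, and the two sign factors straight when recognizing the Laguerre series. A routine alternative, should one prefer to avoid the inversion identity, is a direct induction on $m$: differentiating the claimed right-hand side once and invoking the derivative rule $\frac{d}{dx}L_{n}^{(\alpha)}=-L_{n-1}^{(\alpha+1)}$, the relation $xL_{n-1}^{(\alpha+1)}=(n+\alpha)L_{n-1}^{(\alpha)}-nL_{n}^{(\alpha)}$ and the three-term recurrence collapses the expression to the next index, while the base case $m=1$ is precisely the inversion formula above with $n=k+1$.
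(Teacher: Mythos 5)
Your proof is correct, but it follows a genuinely different route from the paper's. The paper derives (\ref{ede}) by invoking Rodrigues' formula for the Laguerre polynomials (applied, after the substitution $\omega=z/\tau$, to produce $L_{m+k}^{(-1-k)}$ with a \emph{negative} upper parameter) and then quoting the transformation identity $L_{m+k}^{(-1-k)}(x)=(-1)^{k+1}x^{k+1}\tfrac{(m-1)!}{(m+k)!}L_{m-1}^{(k+1)}(x)$ from Magnus--Oberhettinger--Soni; the whole argument is essentially two citations to the special-function literature. You instead reduce everything to the elementary inversion identity $\tfrac{d^{n}}{dz^{n}}\bigl[z^{n-1}g(1/z)\bigr]=\tfrac{(-1)^{n}}{z^{n+1}}g^{(n)}(1/z)$, choose $g(y)=y^{m-1}e^{-\tau y}$ so that $z^{n-1}g(1/z)=z^{k}e^{-\tau/z}$ with $n=m+k$, and then obtain $g^{(m+k)}(y)=(-1)^{k+1}\tau^{k+1}(m-1)!\,e^{-\tau y}L_{m-1}^{(k+1)}(\tau y)$ by a direct Leibniz expansion that, after reindexing, is term-by-term the defining series of $L_{m-1}^{(k+1)}$ stated just before the lemma (the coefficient $\binom{m+k}{m-1-\ell}$ is exactly $\binom{n+\alpha}{n-\ell}$ with $n=m-1$, $\alpha=k+1$); the signs $(-1)^{m+k}(-1)^{k+1}=(-1)^{m+1}$ and the constants check out against (\ref{ede}). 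What your approach buys is self-containedness: it uses only the definition of $L_{n}^{(\alpha)}$ already displayed in the paper and avoids both Rodrigues' formula and Laguerre polynomials with negative parameter, at the cost of a somewhat longer computation; the paper's version is shorter on the page but leans on two external identities.
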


\begin{proof}
First of all remember that $f_{0}(z)=e^{\tau }e^{-\tau /z}$. Defining $%
\omega =z/\tau $ and using Rodrigues' formula for Laguerre polynomials (see 
\cite{Aste} p.101) we obtain%
\begin{eqnarray*}
\frac{d^{m+k}}{dz^{m+k}}\exp (-\frac{\tau }{z})z^{k} &=&\frac{1}{\tau ^{m}}%
\frac{d^{m+k}}{d\omega ^{m+k}}\exp \left( -\omega ^{-1}\right) \left( \omega
^{-1}\right) ^{-k}, \\
&=&\frac{1}{\tau ^{m}}(-1)^{m+k}(m+k)!\exp (-\omega ^{-1})\omega
^{-m}L_{m+k}^{(-1-k)}(\omega ^{-1}).
\end{eqnarray*}%
The result arises from the relation (see \cite{Mag} p.240)%
\begin{equation*}
L_{m+k}^{(-1-k)}(\frac{\tau }{z})=(-1)^{k+1}(\frac{\tau }{z})^{k+1}\frac{%
(m-1)!}{(m+k)!}L_{m-1}^{(k+1)}(\frac{\tau }{z}).
\end{equation*}
\end{proof}

Before stating the main result we need to remember the following properties
of the generalized Laguerre polynomials, that can be found in \cite{Aste}
pp. 785-786.

\begin{enumerate}
\item[L1] 
\begin{equation*}
L_{n}^{(\alpha +\beta +1)}(z_{1}+z_{2})=\sum_{j=0}^{n}L_{j}^{(\alpha
)}(z_{1})L_{n-j}^{(\beta )}(z_{2}).
\end{equation*}

\item[L2] 
\begin{equation*}
L_{n}^{(\alpha )}(z_{1}z_{2})=\sum_{j=0}^{n}\binom{n+\alpha }{j}%
L_{j}^{(\alpha )}(z_{1})z_{2}^{j}(1-z_{2})^{n-j}.
\end{equation*}

\item[L3] 
\begin{equation*}
\exp (\frac{-x}{2})\left\vert L_{n}^{(\alpha )}(x)\right\vert \leq \frac{%
\Gamma (n+\alpha +1)}{n!\Gamma (\alpha +1)},\quad \text{for }x\geq 0.
\end{equation*}
\end{enumerate}

\begin{proposition}
Given $r\geq 0$, let $z=\left( 1+\delta re^{i\theta }\right) ^{-1}\in \Sigma
_{\theta }$. Let moreover%
\begin{equation}
c_{j}(\theta ):=\left( 1+\sqrt{2(1-\cos \theta )}\right) ^{j}.  \label{cj}
\end{equation}%
Then%
\begin{eqnarray}
\left\vert L_{m-1}^{(k+1)}(\frac{\tau }{z})\right\vert &\leq &e^{\frac{hr}{2}%
}\sum_{j=0}^{m-1}\left\vert L_{m-1-j}^{(k)}(\tau )\right\vert c_{j}(\theta ),
\label{Lk1} \\
&\leq &e^{\frac{\tau +hr}{2}}\sum\nolimits_{j=0}^{m-1}\binom{m+k-j-1}{k}%
c_{j}(\theta ).  \label{Lk2}
\end{eqnarray}
\end{proposition}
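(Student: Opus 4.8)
The plan is to first rewrite the argument $\tau/z$ in additive form and then apply the Laguerre identities L1 and L2 in sequence, finishing each bound with the magnitude estimate L3.

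First I would compute $\tau/z$ explicitly. Since $z=(1+\delta r e^{i\theta})^{-1}$ we have $1/z=1+\delta r e^{i\theta}$, and because $\tau\delta=h$ this gives
\begin{equation*}
\frac{\tau}{z}=\tau+\tau\delta r e^{i\theta}=\tau+hr e^{i\theta}.
\end{equation*}
The purpose of this rewriting is to split the argument into a nonnegative real part $\tau$ and a part $hr e^{i\theta}$ whose modulus is the nonnegative real number $hr$ times a unimodular factor. This separation is exactly what later allows a reduction to L3, which applies only to nonnegative real arguments.

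Next I would apply the addition formula L1 with $z_1=\tau$, $z_2=hr e^{i\theta}$ and parameters $\alpha=k$, $\beta=0$ (so that $\alpha+\beta+1=k+1$). After the reindexing $i=m-1-j$ this yields
\begin{equation*}
L_{m-1}^{(k+1)}\!\left(\tfrac{\tau}{z}\right)=\sum_{i=0}^{m-1}L_{m-1-i}^{(k)}(\tau)\,L_{i}^{(0)}(hr e^{i\theta}),
\end{equation*}
so by the triangle inequality the whole estimate reduces to bounding each factor $|L_{i}^{(0)}(hr e^{i\theta})|$ by $e^{hr/2}c_{i}(\theta)$. To obtain that, I would invoke the multiplication formula L2 with $z_1=hr$, $z_2=e^{i\theta}$ and $\alpha=0$, noting $|z_2|=1$ and $|1-z_2|=|1-e^{i\theta}|=\sqrt{2(1-\cos\theta)}$. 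Each real factor $|L_{j}^{(0)}(hr)|$ is then bounded via L3 (with $\alpha=0$, argument $hr\ge 0$) by $e^{hr/2}$, and the remaining sum collapses by the binomial theorem to $(1+\sqrt{2(1-\cos\theta)})^{i}=c_{i}(\theta)$. This establishes the first inequality (\ref{Lk1}).

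For the second bound (\ref{Lk2}) I would simply apply L3 once more, now to the real factor $L_{m-1-i}^{(k)}(\tau)$ (valid since $\tau=h/\delta\ge 0$), obtaining $|L_{m-1-i}^{(k)}(\tau)|\le e^{\tau/2}\binom{m+k-i-1}{k}$ after simplifying the ratio of Gamma functions, and then factoring out $e^{\tau/2}$. The only real obstacle is the index bookkeeping: one must choose $\beta=0$ in L1 and match indices after reindexing so that the surviving superscript is exactly $k$, and one must confirm that both real arguments fed into L3, namely $hr$ and $\tau$, are nonnegative. The conceptual crux is that L3 is available only for nonnegative real arguments, so the complex phase $e^{i\theta}$ must be stripped off through L2 before L3 can be applied; everything else is routine.
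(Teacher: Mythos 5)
Your proposal is correct and follows essentially the same route as the paper: rewrite $\tau/z=\tau+hre^{i\theta}$, apply L1 with $\alpha=k$, $\beta=0$, then L2 with $z_1=hr$, $z_2=e^{i\theta}$, collapse the unimodular/binomial sum to $c_j(\theta)$, and finish with L3 applied first to $L_s^{(0)}(hr)$ and then to $L_{m-1-j}^{(k)}(\tau)$. The only difference is presentational (you bound each factor $\left\vert L_i^{(0)}(hre^{i\theta})\right\vert$ separately rather than carrying the double sum), which changes nothing.
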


\begin{proof}
For $z=\left( 1+\delta re^{i\theta }\right) ^{-1}$%
\begin{equation*}
\frac{\tau }{z}=\tau +hre^{i\theta },\quad r\geq 0.
\end{equation*}%
Using L1 with $\alpha =k$, $\beta =0$, $z_{1}=\tau $ and $z_{2}=hre^{i\theta
}$, and then L2 with $z_{1}=hr$ and $z_{2}=e^{i\theta }$, we have%
\begin{eqnarray}
\left\vert L_{m-1}^{(k+1)}(\frac{\tau }{z})\right\vert &=&\left\vert
\sum_{j=0}^{m-1}L_{m-j-1}^{(k)}(\tau )L_{j}^{(0)}(hre^{i\theta })\right\vert
,  \label{pri} \\
&\leq &\sum_{j=0}^{m-1}\left\vert L_{m-j-1}^{(k)}(\tau )\right\vert
\sum_{s=0}^{j}\left\vert L_{s}^{(0)}(hr)\right\vert \left\vert \binom{j}{s}%
e^{is\theta }(1-e^{i\theta })^{j-s}\right\vert .  \notag
\end{eqnarray}%
Since 
\begin{equation*}
\sum_{s=0}^{j}\left\vert \binom{j}{s}e^{is\theta }(1-e^{i\theta
})^{j-s}\right\vert =c_{j}(\theta ),
\end{equation*}%
formulas (\ref{Lk1}) and (\ref{Lk2}) are obtained applying L3 to $%
L_{s}^{(0)}(hr)$ and then to $L_{m-j-1}^{(k)}(\tau )$.
\end{proof}

\begin{theorem}
\label{pro1}Assume that $F(L)\subset S_{\theta }$, with $\theta <\frac{\pi }{%
3}$. Then%
\begin{eqnarray}
\left\Vert E_{k,m}\right\Vert &\leq &K\frac{e^{\tau \left( \cos \theta -%
\frac{1}{2}\right) -m-k-1}}{\tau ^{m+k}}\left( \frac{2(m+k+1)}{2\cos \theta
-1}\right) ^{m+k+1}C_{k,m}(\tau ,\theta )\prod\limits_{i=1}^{m}h_{i+1,i},
\label{fe1} \\
&\leq &K\frac{e^{\tau \cos \theta -m-k-1}}{\tau ^{m+k}}\left( \frac{2(m+k+1)%
}{2\cos \theta -1}\right) ^{m+k+1}C_{k,m}^{\prime }(\theta
)\prod\limits_{i=1}^{m}h_{i+1,i},  \label{fe2}
\end{eqnarray}%
where%
\begin{eqnarray}
C_{k,m}(\tau ,\theta ) &:&=\frac{(m-1)!}{(m+k)!}\sum_{j=0}^{m-1}\left\vert
L_{m-1-j}^{(k)}(\tau )\right\vert c_{j}(\theta ),  \label{ckm} \\
C_{k,m}^{\prime }(\theta ) &:&=\frac{(m-1)!}{(m+k)!}\sum\nolimits_{j=0}^{m-1}%
\binom{m+k-j-1}{k}c_{j}(\theta ),  \label{dkm}
\end{eqnarray}%
and $K$ defined as in Proposition \ref{pg}.
\end{theorem}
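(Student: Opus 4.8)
The plan is to push the bound of Proposition \ref{pg} through the Lemma (equation \ref{ede}), reduce the maximum over $G_\theta$ to a maximum over the boundary arc $\Sigma_\theta$ by the maximum modulus principle, and then finish with an elementary one-variable optimization in the boundary parameter $r$. Concretely, I would first combine Proposition \ref{pg} with (\ref{ede}) to obtain
\[
\left\Vert E_{k,m}\right\Vert \leq K\,\frac{(m-1)!}{(m+k)!}\max_{z\in G_\theta}\left|\frac{\tau\,f_0(z)}{z^{m+k+1}}\,L_{m-1}^{(k+1)}\!\left(\frac{\tau}{z}\right)\right|\prod_{i=1}^{m}h_{i+1,i}.
\]
The function inside the modulus is analytic in $\mathbb{C}\setminus\{0\}$, hence in $\mathrm{int}(G_\theta)$; since $0\in\Sigma_\theta$ is its only singular point and the exponential decay of $f_0$ beats the polynomial growth of $z^{-(m+k+1)}L_{m-1}^{(k+1)}(\tau/z)$ as $z\to 0$ inside $G_\theta$, it extends continuously to $\overline{G_\theta}$. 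By the maximum modulus principle the maximum is attained on $\Sigma_\theta$, and by the conjugate symmetry of the configuration (the Laguerre polynomials have real coefficients) it suffices to examine the arc $z=(1+\delta r e^{i\theta})^{-1}$, $r\geq 0$.

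On this arc I would substitute the explicit moduli. Using $\tau\delta=h$ one has $\tau/z=\tau+hre^{i\theta}$, so $|f_0(z)|=e^{-hr\cos\theta}$ and $|z|^{-(m+k+1)}=(1+2\delta r\cos\theta+\delta^2 r^2)^{(m+k+1)/2}$, while the preceding Proposition supplies the bound (\ref{Lk1}) for $|L_{m-1}^{(k+1)}(\tau/z)|$, contributing the factor $e^{hr/2}$ together with the $r$-independent sum. Pulling out $\tau\frac{(m-1)!}{(m+k)!}\sum_j|L_{m-1-j}^{(k)}(\tau)|c_j(\theta)=\tau\,C_{k,m}(\tau,\theta)$, the task reduces to maximizing over $r\geq 0$ the scalar
\[
g(r)=e^{-hr\left(\cos\theta-\frac12\right)}\left(1+2\delta r\cos\theta+\delta^2 r^2\right)^{(m+k+1)/2}.
\]
This is exactly where the hypothesis $\theta<\pi/3$ enters: it guarantees $\cos\theta-\tfrac12>0$, so the exponential factor decays and the maximum is finite.

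The core computation, and the only genuine obstacle, is this maximization. I would set $x=\delta r$ and use $\cos\theta\leq 1$ to bound $1+2\delta r\cos\theta+\delta^2 r^2\leq(1+x)^2$, reducing the problem to maximizing $e^{-\tau(\cos\theta-\frac12)x}(1+x)^{m+k+1}$ over $x\geq 0$. This elementary function attains its global maximum (over $x>-1$) at $x^{\ast}=\frac{m+k+1}{\tau(\cos\theta-1/2)}-1$, with value $e^{\tau(\cos\theta-\frac12)-(m+k+1)}\bigl(\tfrac{2(m+k+1)}{\tau(2\cos\theta-1)}\bigr)^{m+k+1}$; when $x^{\ast}<0$ this global optimum merely over-estimates the maximum over $[0,\infty)$, so it stays a valid bound. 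Multiplying by $\tau\,C_{k,m}(\tau,\theta)$ and collecting the powers of $\tau$ produces precisely (\ref{fe1}).

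Finally, for (\ref{fe2}) I would rerun the same argument with the cruder Laguerre estimate (\ref{Lk2}) in place of (\ref{Lk1}). Its extra factor $e^{\tau/2}$ combines with $e^{\tau(\cos\theta-\frac12)}$ to give $e^{\tau\cos\theta}$, and the binomial sum it carries is exactly $C_{k,m}^{\prime}(\theta)$, so the second bound follows with no further work. The sharp constant $K$ with $2\leq K\leq 11.08$ (and $K=1$ in the symmetric case) is inherited verbatim from Proposition \ref{pg} and never needs to be touched in this argument.
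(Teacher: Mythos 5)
Your proposal is correct and follows essentially the same route as the paper: combine Proposition \ref{pg} with the Laguerre identity (\ref{ede}), restrict to the boundary arc $z=(1+\delta re^{i\theta})^{-1}$, apply (\ref{Lk1}) (resp.\ (\ref{Lk2})), and optimize $e^{-hr(\cos\theta-1/2)}(1+\delta r)^{m+k+1}$ over $r\geq 0$, which is exactly the paper's maximization step. The only difference is that you make explicit two points the paper leaves tacit — the maximum modulus reduction from $G_\theta$ to $\Sigma_\theta$ and the bound $|1+\delta re^{i\theta}|\leq 1+\delta r$ — which is a welcome clarification rather than a deviation.
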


\begin{proof}
For $z\in \Sigma _{\theta }$%
\begin{equation*}
\frac{1}{z}=1+\delta re^{i\theta },\quad r\geq 0,
\end{equation*}%
and%
\begin{equation*}
f_{0}(z)=e^{\tau -\frac{\tau }{z}}=e^{-hre^{i\theta }}.
\end{equation*}%
Hence, using (\ref{erb}), (\ref{ede}) and (\ref{Lk1}) we obtain%
\begin{eqnarray}
\left\Vert E_{k,m}\right\Vert &\leq &K\max_{r\geq 0}\left\vert e^{-hr(\cos
\theta -\frac{1}{2})}\left( 1+\delta re^{i\theta }\right)
^{m+k+1}\right\vert \tau  \notag \\
&&\times \frac{(m-1)!}{(m+k)!}\sum_{j=0}^{m-1}\left\vert
L_{m-1-j}^{(k)}(\tau )\right\vert c_{j}(\theta
)\prod\limits_{i=1}^{m}h_{i+1,i}.  \label{tmo}
\end{eqnarray}%
Since for $\theta <\pi /3$%
\begin{equation*}
e^{-hr\left( \cos \theta -\frac{1}{2}\right) }\left( 1+\delta r\right)
^{m+k+1}\leq \frac{e^{\tau \left( \cos \theta -\frac{1}{2}\right) -m-k-1}}{%
\tau ^{m+k+1}}\left( \frac{2(m+k+1)}{2\cos \theta -1}\right) ^{m+k+1},
\end{equation*}%
(looking for the maximum with respect to $r$), we immediately obtain (\ref%
{fe1}). Using again (\ref{erb}) and (\ref{ede}) but now with (\ref{Lk2}) we
arrive at the coarser bound (\ref{fe2}).
\end{proof}

\begin{remark}
While formulas (\ref{fe1}) and (\ref{fe2}) theoretically hold for $\theta <%
\frac{\pi }{3}$ since $h_{m+1,m}=0$ for $m\leq M$, it is necessary to point
out that for $\theta \approx \frac{\pi }{3}\ $we may observe a rapid growth
of the term%
\begin{equation*}
\left( \frac{1}{2\cos \theta -1}\right)
^{m+k+1}\prod\limits_{i=1}^{m}h_{i+1,i},
\end{equation*}%
depending of course on the problem, so that the bounds may be useless. This
situation is caused by the bound (\ref{Lk1}) that leads to the appearance of
the term $2\cos \theta -1$ at the denominator. Working in inexact
arithmetics the situation is even more difficult because of the loss of
orthogonality of the vectors $v_{j}$ of the Arnoldi algorithm and hence the
accumulation of errors on the entries $h_{i+1,i}$. For these reasons, in
practice, formulas (\ref{fe1}) and (\ref{fe2}) should be used only for $%
\theta $ not much close to $\frac{\pi }{3}$.
\end{remark}

\begin{remark}
For the exponential case ($k=0$) we have%
\begin{equation*}
C_{0,m}^{\prime }(\theta )=\frac{1}{m}\sum\nolimits_{j=0}^{m-1}c_{j}(\theta
),
\end{equation*}%
and hence by (\ref{fe2})%
\begin{equation}
\left\Vert E_{0,m}\right\Vert \leq K\frac{e^{\tau \cos \theta -m-1}}{m\tau
^{m}}\left( \frac{2(m+1)}{2\cos \theta -1}\right)
^{m+1}\sum\nolimits_{j=0}^{m-1}c_{j}(\theta )\prod\limits_{i=1}^{m}h_{i+1,i}.
\label{xp}
\end{equation}
\end{remark}

\begin{remark}
In the self-adjoint case $\theta =0$ we have $c_{j}(\theta )=1$ and formula (%
\ref{fe2}) simplifies to%
\begin{equation*}
\left\Vert E_{k,m}\right\Vert \leq K\frac{e^{\tau -m-k-1}}{\tau ^{m+k}}\frac{%
\left( 2(m+k+1)\right) ^{m+k+1}}{(k+1)!}\prod\limits_{i=1}^{m}h_{i+1,i}.
\end{equation*}
\end{remark}

The reason for which we consider two bounds in Theorem \ref{pro1} is that
the second one (\ref{fe2}) allows us to define suitably the parameter $\tau $
(and then $\delta $) while the first one (\ref{fe1}) should be used whenever 
$\tau $ has been defined. Indeed, assuming $\prod%
\nolimits_{i=1}^{m}h_{i+1,i} $ independent of $\delta $ and then of $\tau $
(actually this is not true as we explain in Section 9) by (\ref{fe2}),
looking for the minimum of $e^{\tau \cos \theta }\tau ^{-\left( m+k\right) }$
we easily find that the optimal value for $\tau $ is given by%
\begin{equation}
\tau =\frac{m+k}{\cos \theta }.  \label{topt}
\end{equation}

The a-posteriori bounds provided by Theorem \ref{pro1} depends substantially
on the semiangle $\theta $ of the sector containing $F(L)$. Therefore, the
most natural way to proceed is to compute the boundary of $F(L)$ using the
standard codes available in literature (as for instance the Matlab code 
\texttt{fv.m} by Higham \cite{Hi}). It is important to observe that $\theta $
is generally independent of the discretization so that one can work in
smaller dimension.

\bigskip

While the hypothesis $F(L)\subset S_{\theta }$ of Theorem \ref{pro1} is
extremely general, the underlying assumption is that $L$ represents an
arbitrary sharp discretization of an unbounded operator. On the other side,
if it is known that $F(L)$ is contained in a bounded sector then Proposition %
\ref{stp} can be used to derive sharper error estimates. In general we may
refer again to \cite{BR} and the references therein for a background on the
most used techniques based on the use of the integral representation of the
error.

Anyway, here we want also to show how to adapt our approach in precence of
more information on $F(L)$. Let $D_{0,R}$ be the disk centered at $0$ with
radius $R$, and assume that $F(L)\subset S_{\theta }\cap D_{0,R}$. Using
again (\ref{erb}) and (\ref{ede}), we arrive at the bound%
\begin{eqnarray}
\left\Vert E_{k,m}\right\Vert  &\leq &K\max_{0\leq s\leq hR}\left\vert
e^{-s\cos \theta }\left( 1+\frac{s}{\tau }\right)
^{m+k+1}L_{m-1}^{(k+1)}(\tau +se^{i\theta })\right\vert \tau   \notag \\
&&\times \frac{(m-1)!}{(m+k)!}\prod\limits_{i=1}^{m}h_{i+1,i}.  \label{bdn}
\end{eqnarray}%
In order to define a suitable value for $\tau $, we just need to bound the
Laguerre polynomials as in (\ref{Lk2}), so that the optimal value is
obtained ooking for the minimum of%
\begin{equation*}
\tau \left( 1+\frac{s}{\tau }\right) ^{m+k+1}e^{\frac{\tau }{2}}.
\end{equation*}%
A good approximation for this minimum is given by%
\begin{equation}
\tau =\sqrt{2hR(m+k+1)},  \label{tbo}
\end{equation}%
that is obtained considering the bound%
\begin{equation*}
\left( 1+\frac{s}{\tau }\right) ^{m+k+1}\leq \exp \left( (m+k+1)\frac{hR}{%
\tau }\right) .
\end{equation*}%
Using this value of $\tau $ we can derive practical error bounds seeking for
the maximum of the function $\left\vert e^{-s\cos \theta }\left( 1+\frac{s}{%
\tau }\right) ^{m+k+1}L_{j}^{(0)}(se^{i\theta })\right\vert $ (cf. (\ref{pri}%
)) in the interval $[0,hR]$.

\section{A-priori error bounds}

Formula (\ref{topt}) obviously requires to know the number of iterations
that are necessary to achieve a certain accuracy. In this sense we need to
bound in some way $\prod\nolimits_{i=1}^{m}h_{i+1,i}$. By (\ref{try}) and
since%
\begin{equation*}
\left\Vert q_{m}(Z)v\right\Vert \leq \left\Vert p_{m}(Z)v\right\Vert
\end{equation*}%
for each monic polynomial $p_{m}$ of exact degree $m$ (see \cite{Trefe} p.
269), a bound for $\prod\nolimits_{i=1}^{m}h_{i+1,i}$ can be stated using
Faber polynomials as explained in \cite{Beck}, that leads to%
\begin{equation}
\prod\nolimits_{i=1}^{m}h_{i+1,i}=\left\Vert q_{m}(Z)v\right\Vert \leq
2\gamma (G)^{m},  \label{be}
\end{equation}%
where $\gamma (G)$ is the logarithmic capacity of a compact$\ G\ $containing 
$F(Z)$ and where $f_{k}$ is analytic.

\begin{proposition}
\label{pap}Let $\theta ^{\ast }=0.48124$ and assume that $F(L)\subset
S_{\theta }$, with $\theta <\theta ^{\ast }$. Then for $\tau =(m+k)/\cos
\theta $ 
\begin{equation}
\left\Vert E_{k,m}\right\Vert \leq 11K\rho (\theta )^{m},  \label{ap}
\end{equation}%
where%
\begin{equation}
\rho (\theta ):=\left( 1+\sqrt{2(1-\cos \theta )}\right) \frac{\cos \theta }{%
4\cos \theta -2}\frac{\pi }{\pi -\theta }<1\quad \text{for\quad }0\leq
\theta <\theta ^{\ast }.  \label{rot}
\end{equation}
\end{proposition}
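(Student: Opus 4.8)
The plan is to collapse the a-posteriori bound (\ref{fe2}), evaluated at the optimal parameter (\ref{topt}), together with the a-priori estimate (\ref{be}) for the subdiagonal product, into a single geometric factor $\rho(\theta)^m$. First I would insert $\tau=(m+k)/\cos\theta$ into (\ref{fe2}). With this choice $e^{\tau\cos\theta}=e^{m+k}$ and $\tau^{m+k}=(m+k)^{m+k}/(\cos\theta)^{m+k}$, so the explicit prefactor becomes
\[
\frac{e^{\tau\cos\theta-m-k-1}}{\tau^{m+k}}\left(\frac{2(m+k+1)}{2\cos\theta-1}\right)^{m+k+1}=e^{-1}\,\frac{(m+k+1)^{m+k+1}}{(m+k)^{m+k}}\,\frac{2^{m+k+1}(\cos\theta)^{m+k}}{(2\cos\theta-1)^{m+k+1}}.
\]
Using $(m+k+1)^{m+k+1}/(m+k)^{m+k}=(m+k+1)(1+(m+k)^{-1})^{m+k}\le e\,(m+k+1)$ isolates the geometric base $\bigl(\tfrac{2\cos\theta}{2\cos\theta-1}\bigr)^{m+k}$ up to factors that are at most linear in $m+k$.

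Next I would treat $C'_{k,m}(\theta)$ from (\ref{dkm}). Writing $a=\sqrt{2(1-\cos\theta)}$ so that $c_j(\theta)=(1+a)^j$ by (\ref{cj}), and reindexing $i=m-1-j$, the sum factors as
\[
\sum_{j=0}^{m-1}\binom{m+k-j-1}{k}(1+a)^{j}=(1+a)^{m-1}\sum_{i=0}^{m-1}\binom{k+i}{k}(1+a)^{-i}\le(1+a)^{m-1}\left(\frac{1+a}{a}\right)^{k+1},
\]
where the last step extends the sum to infinity and uses $\sum_{i\ge0}\binom{k+i}{k}x^{i}=(1-x)^{-(k+1)}$ with $x=(1+a)^{-1}$. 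This extracts the factor $(1+a)^m$ and leaves a bounded remainder.

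The decisive ingredient is the subdiagonal product, bounded through (\ref{be}) with the sharpest admissible compact $G=G_\theta$; its logarithmic capacity is what produces the factor $\tfrac{\pi}{\pi-\theta}$ in $\rho$. I would compute $\gamma(G_\theta)$ by conformal mapping: the Möbius transform $T(z)=z/(1-z)$ sends the lune $G_\theta$ (vertices $0,1$, interior angle $2\theta$) onto the infinite sector $\{|\arg\zeta|<\theta\}$ and carries $\infty$ to $-1$; after a rotation, the power map $\zeta\mapsto\zeta^{\pi/(2(\pi-\theta))}$ straightens the complementary wedge of opening $2(\pi-\theta)$ into a half-plane, and a Cayley transform sends the image of $\infty$ to $\infty$, giving the exterior uniformizer of $G_\theta$. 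Reading off its derivative at $\infty$ yields
\[
\gamma(G_\theta)=\frac{\pi}{4(\pi-\theta)},
\]
which I would sanity-check against the degenerate cases $\theta=0$ (segment $[0,1]$, capacity $1/4$) and $\theta\to\pi/2$ (disk $D_{1/2,1/2}$, capacity $1/2$). By Proposition \ref{pint} we have $F(Z)\subset\mathrm{int}(G_\theta)$, so (\ref{be}) gives $\prod_{i=1}^m h_{i+1,i}\le 2\gamma(G_\theta)^m$.

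Finally I would multiply the three contributions. Collecting the base raised to the $m$-th power,
\[
\frac{2\cos\theta}{2\cos\theta-1}\,(1+a)\,\frac{\pi}{4(\pi-\theta)}=\left(1+\sqrt{2(1-\cos\theta)}\right)\frac{\cos\theta}{4\cos\theta-2}\frac{\pi}{\pi-\theta}=\rho(\theta),
\]
exactly reproducing (\ref{rot}); every leftover factor is either $m$-independent or a $k$-dependent term of the form $\bigl(\tfrac{2\cos\theta}{2\cos\theta-1}\bigr)^k\bigl(\tfrac{1+a}{a}\bigr)^{k+1}$ times $\tfrac{(m-1)!}{(m+k)!}$, where the factorial decay dominates the exponential growth in $k$ and the polynomial growth in $m+k$ is absorbed by $\rho(\theta)^m$ once $\rho(\theta)<1$. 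Bounding the resulting uniformly bounded prefactor (together with the factor $2$ from (\ref{be}) and the $e^{-1}$) by the numerical constant $11$ gives (\ref{ap}); the inequality $\rho(\theta)<1$ for $\theta<\theta^*$ then follows from $\rho(0)=1/2$, the monotonicity of $\rho$ on $[0,\pi/3)$, and the definition of $\theta^*=0.48124$ as the root of $\rho(\theta)=1$. I expect the hard part to be the capacity computation $\gamma(G_\theta)=\tfrac{\pi}{4(\pi-\theta)}$ (getting the conformal map and its normalization at $\infty$ right), together with the bookkeeping needed to certify that all accumulated constants really fit under $11$ uniformly in $m$, $k$ and $\theta$.
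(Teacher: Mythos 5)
Your route is essentially the paper's: insert $\tau=(m+k)/\cos\theta$ into (\ref{fe2}), bound $\prod_{i=1}^{m}h_{i+1,i}$ via (\ref{be}) with $G=G_{\theta}$, and merge the three geometric bases into $\rho(\theta)^{m}$. Your capacity value $\gamma(G_{\theta})=\pi/(4(\pi-\theta))$ is exactly what the paper uses — it reads the coefficient $\frac{1}{2(2-\nu)}$, $\nu=2\theta/\pi$, off the Laurent expansion (\ref{le}) of an explicit exterior map rather than assembling the map by composition as you do, but the two agree, and your sanity checks at $\theta=0$ and $\theta\to\pi/2$ confirm it. Your handling of the prefactor via $(m+k+1)^{m+k+1}/(m+k)^{m+k}\leq e\,(m+k+1)$ also matches the paper's second displayed inequality in its proof.

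The gap is in your bound for $C_{k,m}^{\prime}(\theta)$. Extending $\sum_{i=0}^{m-1}\binom{k+i}{k}(1+a)^{-i}$ to the full negative binomial series yields the factor $\left(\frac{1+a}{a}\right)^{k+1}$ with $a=\sqrt{2(1-\cos\theta)}$, which diverges as $\theta\to0$ (and the series itself diverges at $\theta=0$, where $x=(1+a)^{-1}=1$); the $\theta$-independent factor $(m-1)!/(m+k)!$ cannot compensate. Consequently your ``uniformly bounded prefactor'' is not uniformly bounded on $[0,\theta^{\ast})$ — already for $k=0$ it behaves like $1/(ma)$ near $\theta=0$ while the true value $C_{0,m}^{\prime}(\theta)=\frac{(1+a)^{m}-1}{ma}$ stays bounded by $(1+a)^{m-1}$ — so the absolute constant $11$ in (\ref{ap}) cannot be extracted from your estimate. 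The repair is cheap: since $c_{j}(\theta)=(1+a)^{j}$ is nondecreasing in $j$, bound $c_{j}(\theta)\leq c_{m-1}(\theta)$ and use the hockey-stick identity $\sum_{j=0}^{m-1}\binom{m+k-j-1}{k}=\binom{m+k}{k+1}$ to get $C_{k,m}^{\prime}(\theta)\leq(1+a)^{m-1}/(k+1)!$, which is uniform in $\theta$, decays in $k$, and still supplies the factor $(1+a)^{m}$ needed to form $\rho(\theta)^{m}$. (For what it is worth, the paper's own intermediate bound on $C_{k,m}^{\prime}$ is stated differently and is itself dubious at $k=0$, $\theta=0$; the non-uniformity in $\theta$, however, is specific to your series argument.) Once this step is replaced, the remaining bookkeeping and the numerical facts $\rho(0)=1/2$, $\rho$ increasing, $\rho(\theta^{\ast})=1$ are routine.
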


\begin{proof}
Since $F(Z)\subset G_{\theta }$ by Proposition \ref{pint}, let us consider
the compact subset $G=G_{\theta }$. The associated conformal mapping 
\begin{equation*}
\psi :\mathbb{C}\backslash \left\{ w:\left\vert w\right\vert \leq 1\right\}
\rightarrow \mathbb{C}\backslash G_{\theta },
\end{equation*}%
is given by%
\begin{eqnarray}
\psi (w) &=&\frac{(w+1)^{2-\nu }}{(w+1)^{2-\nu }-(w-1)^{2-\nu }},  \notag \\
&=&\frac{1}{2(2-\nu )}w+\frac{1}{2}+\frac{1}{6}\frac{\left( 1-\nu \right)
\left( 3-\nu \right) }{2-\nu }\frac{1}{w}+O\left( \frac{1}{w^{2}}\right) ,
\label{le}
\end{eqnarray}%
where $\nu =2\theta /\pi $. The coefficient of the leading term of the
Laurent expansion (\ref{le}) is the logarithmic capacity, so that by (\ref%
{be}) we have%
\begin{equation}
\prod\nolimits_{i=1}^{m}h_{i+1,i}\leq 2\left( \frac{1}{2(2-\nu )}\right)
^{m}.  \label{bh}
\end{equation}%
Inserting this bound in (\ref{fe2}) we easily obtain for $\theta <\frac{\pi 
}{3}$%
\begin{eqnarray*}
\left\Vert E_{k,m}\right\Vert &\leq &K\frac{e^{\tau \cos \theta -m-k-1}}{%
\tau ^{m+k}}\left( \frac{m+k+1}{2\cos \theta -1}\right)
^{m+k+1}2^{-m+k+2}\left( \frac{\pi }{\pi -\theta }\right)
^{m}C_{k,m}^{\prime }(\theta ), \\
&\leq &K\frac{m+k+1}{\cos \theta }\left( \frac{\cos \theta }{2\cos \theta -1}%
\right) ^{m+k+1}2^{-m+k+2}\left( \frac{\pi }{\pi -\theta }\right)
^{m}C_{k,m}^{\prime }(\theta ),
\end{eqnarray*}%
where the second inequality arises from the choice $\tau =(m+k)/\cos \theta $%
.

Now, by the definition (\ref{cj}), it is rather easy to show that%
\begin{eqnarray*}
C_{k,m}^{\prime }(\theta ) &=&\frac{(m-1)!}{(m+k)!}\sum\nolimits_{j=0}^{m-1}%
\binom{m+k-j-1}{k}c_{j}(\theta ), \\
&\leq &\frac{1}{k!(m+k)}\left( \frac{m-1}{m+k-1}\right) ^{m}c_{m}(\theta )
\end{eqnarray*}%
so that%
\begin{equation}
\left\Vert E_{k,m}\right\Vert \leq K\frac{e^{-k}}{k!\cos \theta }\left( 
\frac{\cos \theta }{2\cos \theta -1}\right) ^{k+1}2^{k+3}\left[ \rho (\theta
)\right] ^{m}.  \label{app}
\end{equation}%
Since $1/2\leq \Phi (\theta )<1$ for $0\leq \theta <\theta ^{\ast }$, and
since for each $k\geq 0$%
\begin{equation*}
\frac{e^{-k}}{k!\cos \theta }\left( \frac{\cos \theta }{2\cos \theta -1}%
\right) ^{k+1}2^{k+3}\leq \frac{8}{\cos \theta ^{\ast }}\left( \frac{\cos
\theta ^{\ast }}{2\cos \theta ^{\ast }-1}\right) =10.351
\end{equation*}%
the proof is complete.
\end{proof}

\begin{remark}
Proposition \ref{pap} shows the mesh-independence of the method for $\theta
<\theta ^{\ast }$ since the bound (\ref{ap}) is independent of the
discretization of the underlying sectorial operator. In Section 9 this
considaration is extended to $\theta <\frac{\pi }{3}$. By (\ref{app}) and (%
\ref{rot}), in the self-adjoint case ($K=1$) the bound (\ref{ap}) reads%
\begin{equation*}
\left\Vert E_{k,m}\right\Vert \leq \frac{8}{k!}\left( \frac{2}{e}\right)
^{k}\left( \frac{1}{2}\right) ^{m}.
\end{equation*}
\end{remark}

It is worth noting that by (\ref{fe}) for every $p_{m-1}\in \Pi _{m-1}$ we
have that 
\begin{equation*}
\left\Vert E_{k,m}\right\Vert \leq 2K\max_{z\in G}\left\vert
f_{k}(z)-p_{m-1}(z)\right\vert ,
\end{equation*}%
where we assume that $G\subset D_{1/2,1/2}$ is compact, connected, with
associated conformal mapping $\phi $, and such that $F(Z)\subset G$.
Therefore, in principle, one could try to derive a-priori error bounds
choosing suitably the polynomial sequence $\left\{ p_{m-1}\right\} _{m\geq
1} $. Anyway, the classical results in complex polynomial approximation
state that even taking $\left\{ p_{m-1}\right\} _{m\geq 1}$ as a sequence of
polynomials that asymptotically behaves as the sequence of polynomial of
best uniform approximation of $f_{k}$ on $G$ (see e.g \cite{Smile} for a
theoretical background and examples) we have%
\begin{equation*}
\left[ \max_{z\in G}\left\vert f_{k}(z)-p_{m-1}(z)\right\vert \right]
^{1/m}\rightarrow \frac{1}{R}\text{\quad as }m\rightarrow \infty ,
\end{equation*}%
where $R$ is such that $\phi (-R)=0$, since $f_{k}$ is singular at 0 (\emph{%
maximal convergence} property, see e.g \cite{Wal} Chapter IV). The main
problem is that assuming $L$ to be unbounded, $0\in G$ and consequently $R=1$%
.

For this reasons, in our opinion the only reasonable approach to derive
a-priori error bounds, is to define $\left\{ p_{m-1}\right\} _{m\geq 1}$ as
a sequence of polynomials interpolating $f_{k}$ at point belonging to $G$,
and then to use the Hermite-Genocchi formula to bound the divided
differences. Using this formula and taking for instance $p_{m-1}$ as the
sequence of interpolants at the zeros of Faber polynomials we just obtain
the error bound given in Proposition \ref{pap} (see \cite{MN1}).

\section{The generalized residual}

By the integral representation of function of matrices and (\ref{kn}), we
know that the error can be written as%
\begin{equation}
E_{k,m}=\frac{1}{2\pi i}\int_{\Sigma _{\theta
}}f_{k}(z)[(zI-Z)^{-1}v-V_{m}(zI-H_{m})^{-1}e_{1}]dz.  \label{fot1}
\end{equation}%
In order to monitor the approximations during the computation we can
consider the so-called generalized residual \cite{Holuse}, defined as 
\begin{equation}
R_{k,m}=\frac{1}{2\pi i}\int_{\Gamma }f_{k}(z)r_{m}(z)dz,  \label{res}
\end{equation}%
which is obtained from (\ref{fot1}) by replacing the error%
\begin{equation*}
(zI-Z)^{-1}v-V_{m}(zI-H_{m})^{-1}e_{1}
\end{equation*}%
with the corresponding residual 
\begin{equation*}
r_{m}(z)=v-(zI-Z)V_{m}(zI-H_{m})^{-1}e_{1}.
\end{equation*}%
Using the fundamental relation (\ref{i1}) we have immediately%
\begin{equation*}
r_{m}(z)=h_{m+1,m}(e_{m}^{H}(zI-H_{m})^{-1}e_{1})v_{m+1},
\end{equation*}%
and inserting this relation in (\ref{res}) we obtain 
\begin{equation*}
R_{k,m}=h_{m+1,m}(e_{m}^{H}f_{k}(H_{m})e_{1})v_{m+1},
\end{equation*}%
so that we may assume 
\begin{equation}
E_{k,m}\approx \left\Vert R_{k,m}\right\Vert =h_{m+1,m}\left\vert
e_{m}^{H}f_{k}(H_{m})e_{1}\right\vert .  \label{gr}
\end{equation}

In order to show the reliability of this approximation let us consider the
operator

\begin{equation}
Lu=-u^{\prime \prime }+cu^{\prime },\quad c\geq 0,  \label{lu}
\end{equation}%
discretized with central differences in $[0,1]$ with uniform mesh $h=1/(M+1)$%
, and Dirichelet boundary conditions. For our examples, we consider the
computation of $\varphi _{k}(hL)v$ for $k=1,2$, with $v=(1,...,1)^{T}/\sqrt{M%
}$, comparing the exact error and the generalized residual. We take $M=1000$%
, $h=0.1$, and we consider the cases of $c=2$ and $c=4$, whose corresponding
sector semiangles are respectively $\theta =0.201$ and $\theta =0.425$. We
define $\tau =15/\cos \theta $. The results, collected in Figure 1, shows
the accuracy of the approximation (\ref{gr}).

It is necessary to point out that the use of (\ref{gr}) has the basic
disadvantage that it requires the computation of $f_{k}(H_{m})$, $m=1,2,...$%
, and this is a computational drawback whenever a great amount of matrix
functions evaluations are required to integrate a certain problem, even if $%
m $ can be considered much smaller than $M$. Moreover, it frequently happens
(as in our experiments) that the generalized residual tends to underestimate
the error during the first iterations, and this can be particularly
dangerous when computing $\varphi _{k+1}(hL)v$\ with $\left\Vert
v\right\Vert \ll 1$, as for instance in the case of the computation of the
internal stages of an exponential Runge-Kutta method, in which $\left\Vert
v\right\Vert =O(h)$.

On the other side, exploiting the mesh independence of the method the
generalized residual can be successfully used to estimate the optimal value
for the parameter $\tau $, that is $\tau _{opt}=\left( m+k\right) /\cos
\theta $. In other words, using a coarser discretization of the operator we
look for the value of $m$ such that using the corresponding $\tau _{opt}$ we
obtain a certain tolerance in exactly $m$ iterations. For the experiments
reported in Figure 1 we considered the discretization of (\ref{lu}) with
only $M=50$ internal points, observing in both cases that $\left\vert
R_{m}\right\vert \leq 1e-12$ for $m\geq 13$. For this reason we have chosen $%
\tau =15/\cos \theta $.

\begin{center}
\FRAME{itbpFU}{5.2823in}{1.9873in}{0in}{\Qcb{Figure 1 - Comparison between
the exact error and the generalized residual for problem (\protect\ref{lu})
with $h=0.1$. In both experiments $\protect\tau =15/\cos \protect\theta $.}}{%
}{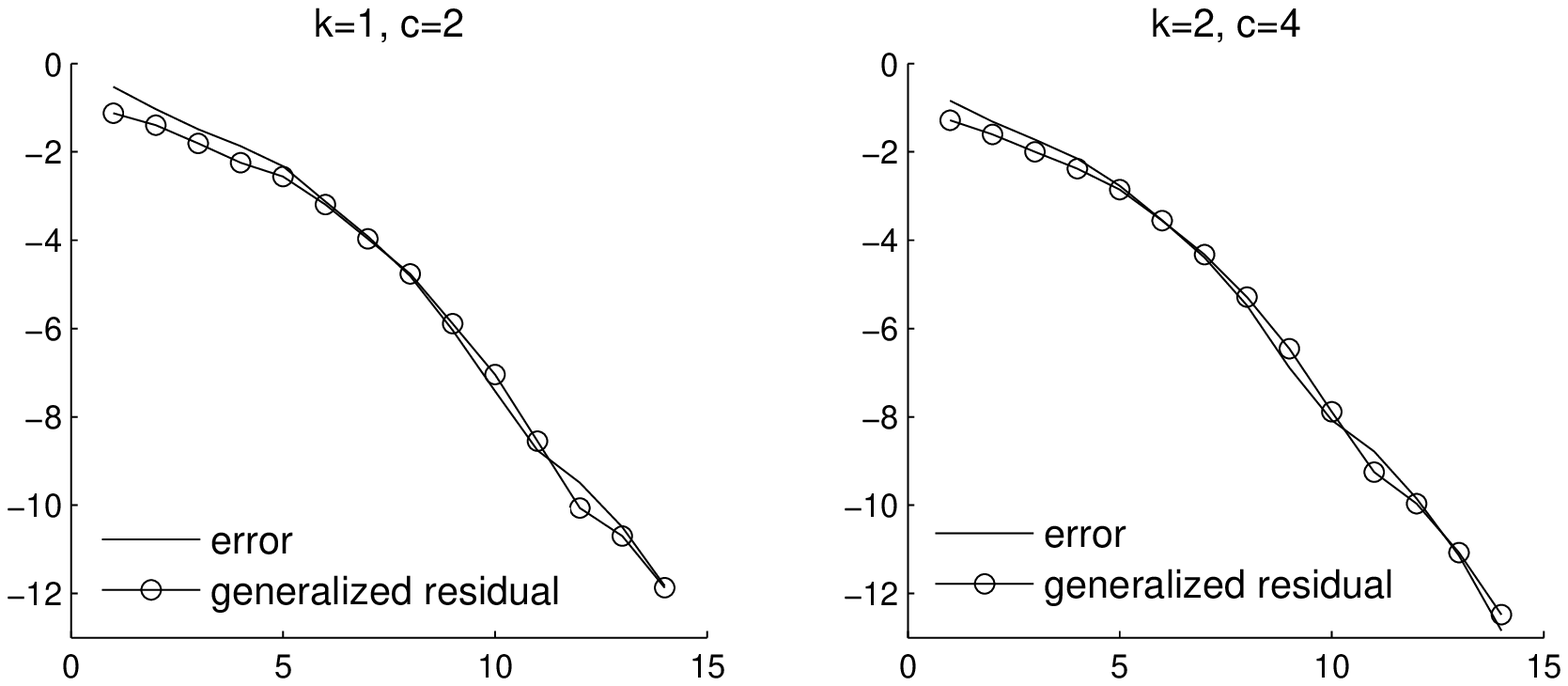}{\special{language "Scientific Word";type
"GRAPHIC";maintain-aspect-ratio TRUE;display "USEDEF";valid_file "F";width
5.2823in;height 1.9873in;depth 0in;original-width 8.0851in;original-height
3.1834in;cropleft "0";croptop "1";cropright "1";cropbottom "0";filename
'genres.eps';file-properties "XNPEU";}}
\end{center}

\section{Numerical experiments for the a-posteriori error bound}

For our numerical experiments we consider again the operator (\ref{lu}),
discretized as in previous section. We consider the computation of the
functions $\varphi _{k}(hL)v$, with $v$ as before and $k=0,1,2$, for $h=0.5$
(Figure 2) and $h=0.05$ (Figure 3). In all examples we do not consider the
symmetric case corresponding to $c=0$ (already investigated in \cite{Mofi}),
but only the cases $c=2$ and $c=4$. As before, for the choice of $\tau $ we
examined the behavior of the method for the coarser discretization of the
same operator with only $M=50$ interior points, thus exploiting the mesh
independence of the method. The analysis suggested to take $\tau =8/\cos
\theta $ for all experiments with $h=0.5$ and $\tau =15/\cos \theta $ for
those with $h=0.05$, thus independently of the function and $c$, using the
tolerance $1e-12$. Inside the Arnoldi iterations the vectors $Zv_{j}$, $%
j\geq 1$ (cf. Section 2), are computed via the LU factorization of $I-\delta
L$. The error bound is given by (\ref{fe1}).

\begin{center}
\FRAME{itbpFU}{4.8784in}{4.9856in}{0in}{\Qcb{Figure 2 - Error and error
bound (\protect\ref{fe1}) for $k=0,1,2$, $h=0.5$, $L$ arising from (\protect
\ref{lu}) with $c=2$ and $c=4$.}}{}{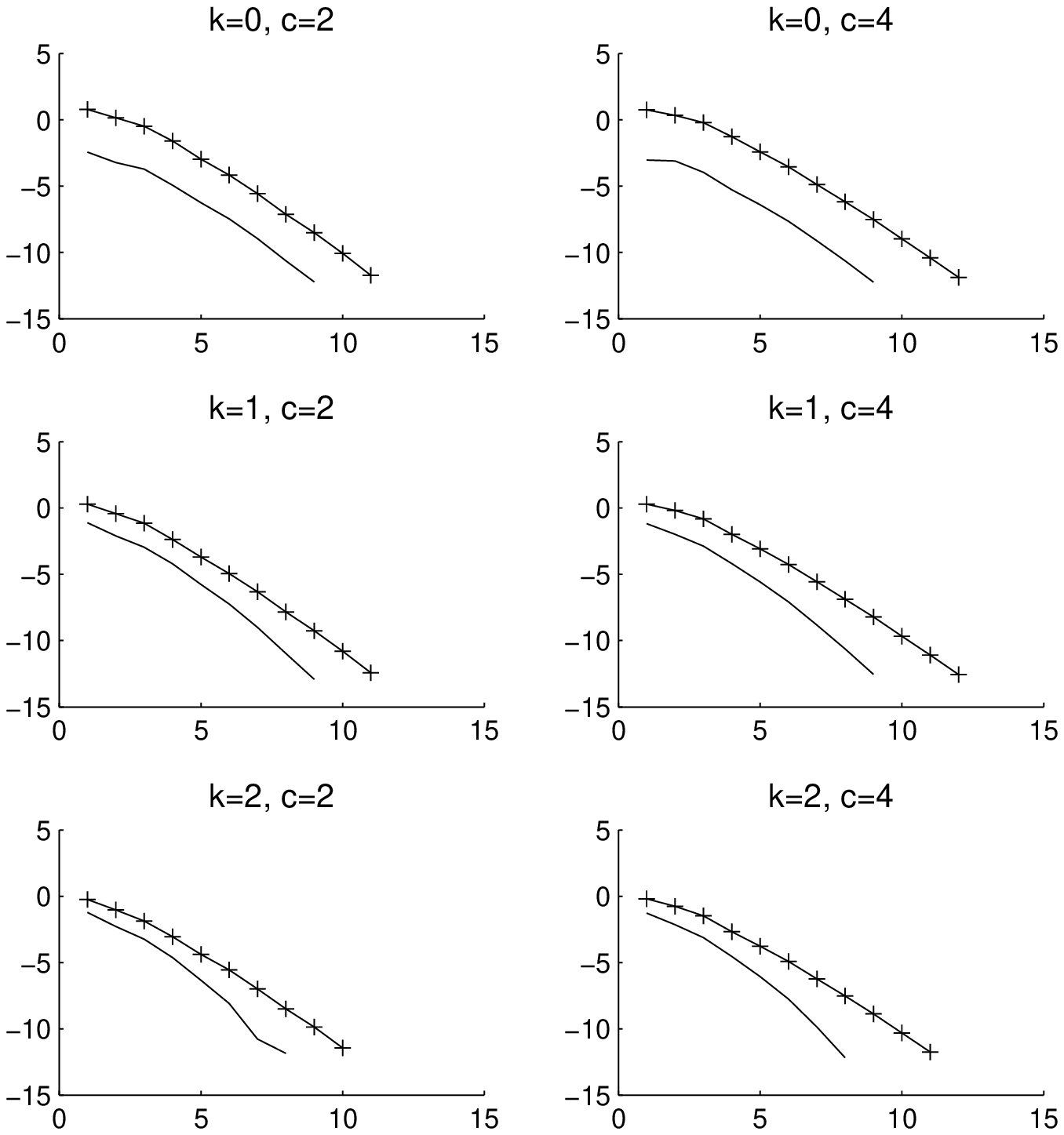}{\special{language "Scientific
Word";type "GRAPHIC";maintain-aspect-ratio TRUE;display "USEDEF";valid_file
"F";width 4.8784in;height 4.9856in;depth 0in;original-width
6.4688in;original-height 6.6098in;cropleft "0";croptop "1";cropright
"1";cropbottom "0";filename 't005.eps';file-properties "XNPEU";}}

\FRAME{itbpFU}{4.5662in}{5.0799in}{0in}{\Qcb{Figure 3 - Error and error
bound (\protect\ref{fe1}) for $k=0,1,2$, $h=0.05$, $L$ arising from (\protect
\ref{lu}) with $c=2$ and $c=4$. }}{}{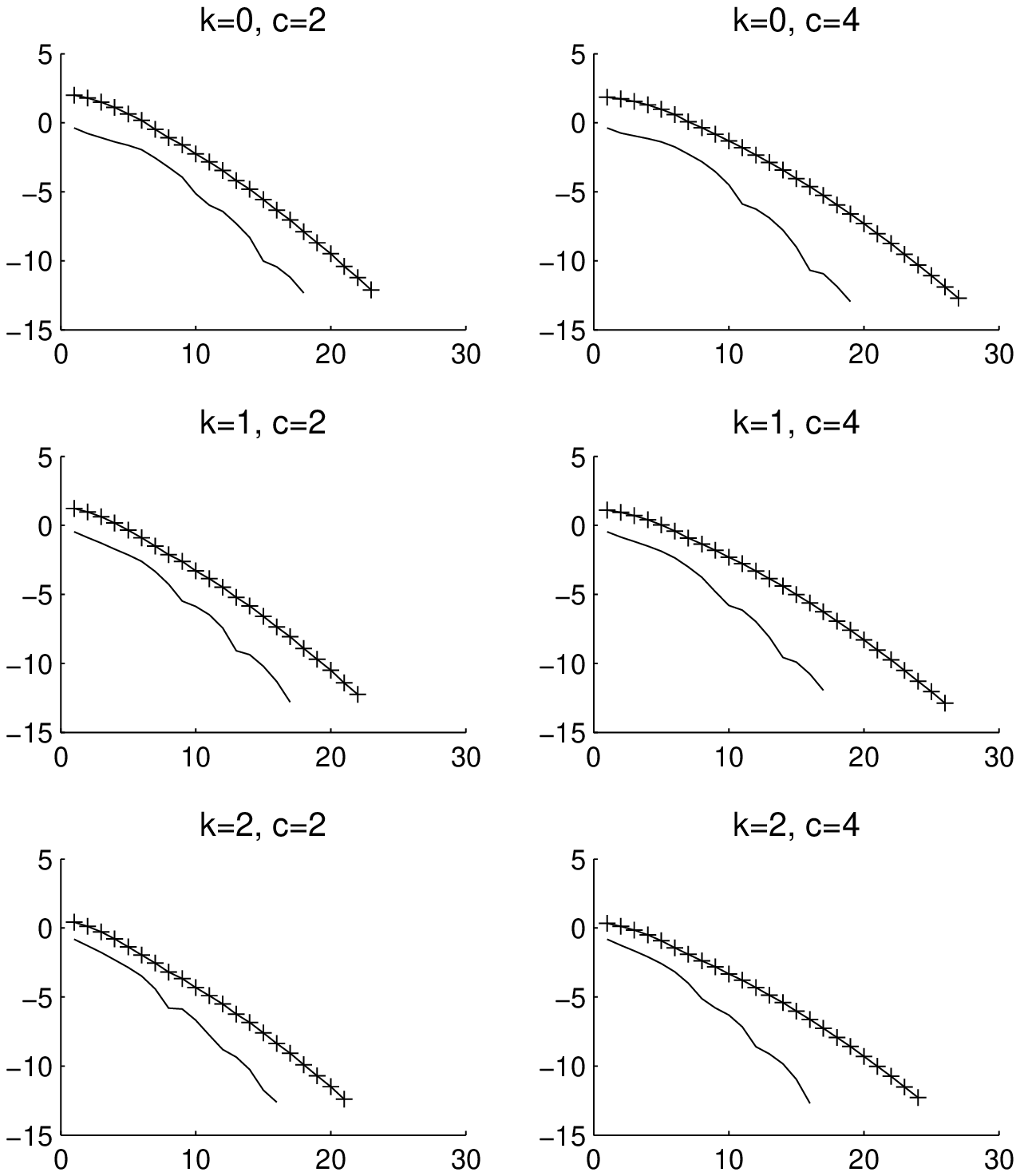}{\special{language "Scientific
Word";type "GRAPHIC";maintain-aspect-ratio TRUE;display "USEDEF";valid_file
"F";width 4.5662in;height 5.0799in;depth 0in;original-width
6.0502in;original-height 6.7369in;cropleft "0";croptop "1";cropright
"1";cropbottom "0";filename 't05.eps';file-properties "XNPEU";}}
\end{center}

Comparing Figure 2 with Figure 3 we can observe that the method tends to
become slower reducing $h$. The reason is that for small values of $h$, the
rate of the decay of the singular values of $Z$ becomes slower and this
reduces the rate of the decay of $\prod\nolimits_{i=1}^{m}h_{i+1,i}$. A
deeper analysis of this behavior will be presented in Section 9.

\section{Non-optimal choice of $\protect\tau $}

Employing the RD Arnoldi method inside an exponential integrator requires
some considerations. First of all, in our opinion the method can be used
only if the implicit computation of $Z$ can be obtained with a sparse
factorization technique. The use of an inner-outer iteration can be too much
expensive in this context. Indeed, the basic point is that organizing
suitably the code one can heavily reduce the number of factorizations of $%
I-\delta L$ (see e.g \cite{Nov}), because the method seems to be really
robust with respect to the choice of $\tau $. For this reason we want here
to show what happens taking $\tau $ even quite far from the optimal one.

For simplicity (the situation is representative of what happens in general)
let us assume to work with exponential function and $\theta =0$. We assume
moreover that the corresponding bound (\ref{xp}) (in which $c_{j}(\theta )=1$%
, $j\geq 0$) is equal to a prescribed tolerance for a certain $m$ with the
theoretical optimal choice $\tau _{opt}=m$. We seek for the interval $%
I_{m,n}=[\tau _{1},\tau _{2}]$ such that for $\tau \in I_{m,n}$ the number
of iterations necessary to achieve the same tolerance is at most $n$ ($\geq
m $). Using (\ref{xp}) and the approximation $h_{m+1,m}\approx 1/4$ ($m>1$)
that is obtained forcing the equal sign in the a-priori bound (\ref{bh}), in
Figure 4 we can observe the result for $n=m+1,m+2$. For each $m$ the
corresponding extremal points $\tau _{1}$ and $\tau _{2}$ of the intervals $%
I_{m,m+1}$ and $I_{m,m+2}$ are plotted. These points are obtained solving
with respect to $\tau $ the equation (cf. (\ref{xp}))%
\begin{equation*}
\frac{e^{\tau -n-1}}{\tau ^{n}}(2(n+1))^{n+1}\prod%
\nolimits_{i=1}^{n}h_{i+1,i}=\frac{e^{-1}}{m^{m}}(2(m+1))^{m+1}\prod%
\nolimits_{i=1}^{m}h_{i+1,i},
\end{equation*}%
for $n=m+1,m+2$.

\begin{center}
\FRAME{itbpFU}{4.7314in}{3.4541in}{0in}{\Qcb{Figure 4 - Boundary of the
region $I_{m,m+1}$ and $I_{m,m+2}$.}}{}{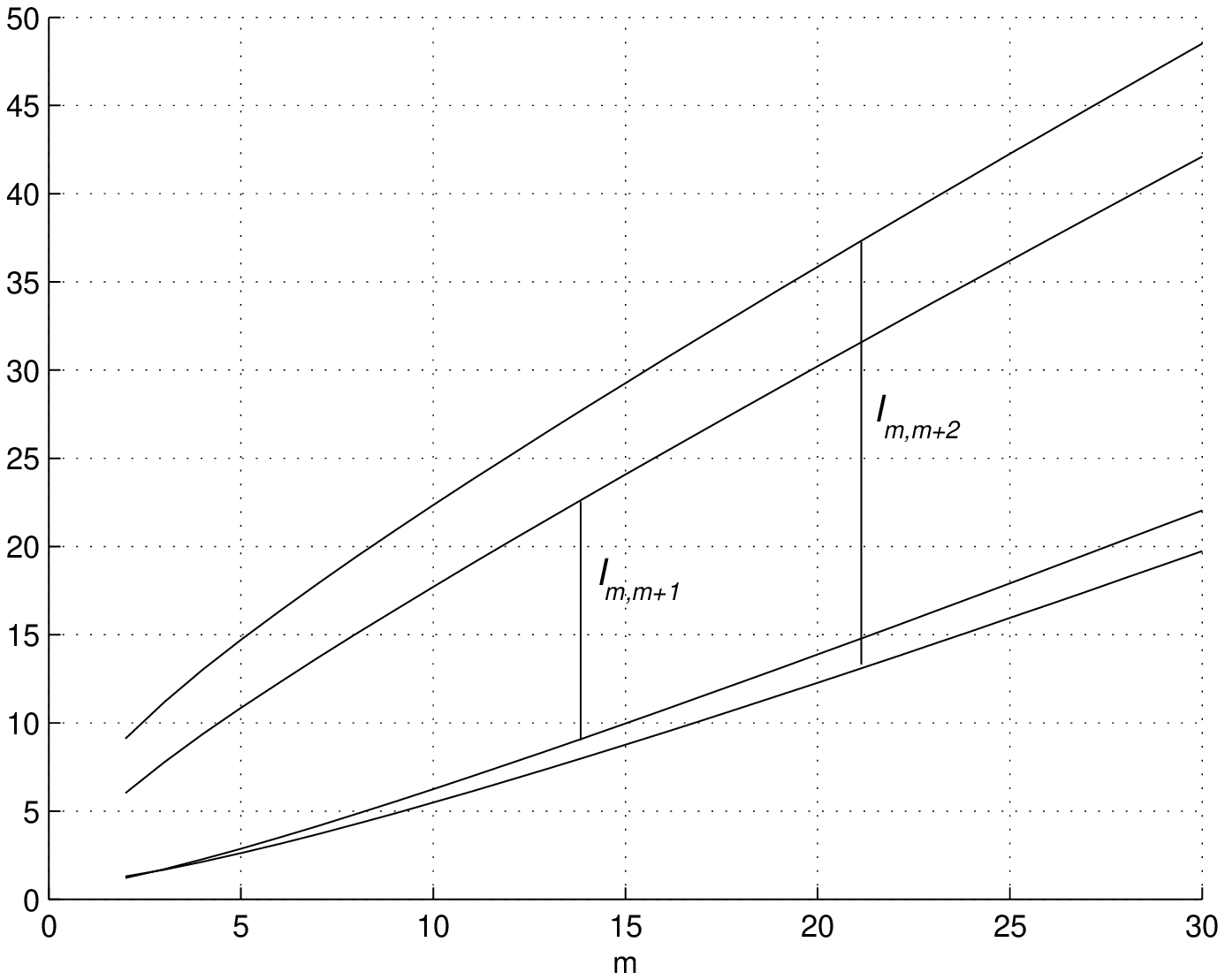}{\special{language
"Scientific Word";type "GRAPHIC";maintain-aspect-ratio TRUE;display
"USEDEF";valid_file "F";width 4.7314in;height 3.4541in;depth
0in;original-width 6.7196in;original-height 4.8948in;cropleft "0";croptop
"1";cropright "1";cropbottom "0";filename 'interval.eps';file-properties
"XNPEU";}}
\end{center}

We point out that the results are even a bit conservative with respect to
what happens in practice, and this is due to the approximation $%
h_{m+1,m}\approx 1/4$. Indeed larger intervals would be obtained taking $%
h_{m+1,m}<1/4$ as it occurs in practice.

In order prove the effectiveness of the above considerations let us consider
again the operator (\ref{lu}) with the usual discretization. We consider the
case $c=2$, $k=1$ for $h=0.1$. To define $\tau $ we consider again the
discretization with $M=50$ interior points observing the generalized
residual. This leads us to define $\tau =(m+k)/\cos \theta $ with $m=14$. In
Figure 5 we consider the behavior of the method for $\tau $, $\tau /2$ and $%
2\tau $.

\begin{center}
\FRAME{itbpFU}{3.4272in}{5.5192in}{0in}{\Qcb{Figure 5 - Error and error
bound (\protect\ref{fe1}) for $k=1$, $h=0.1$ and $L$ arising from (\protect
\ref{lu}) with $c=2$. Method applied with $\protect\tau =15/\cos \protect%
\theta $, $\protect\tau /2$ and $2\protect\tau $.}}{}{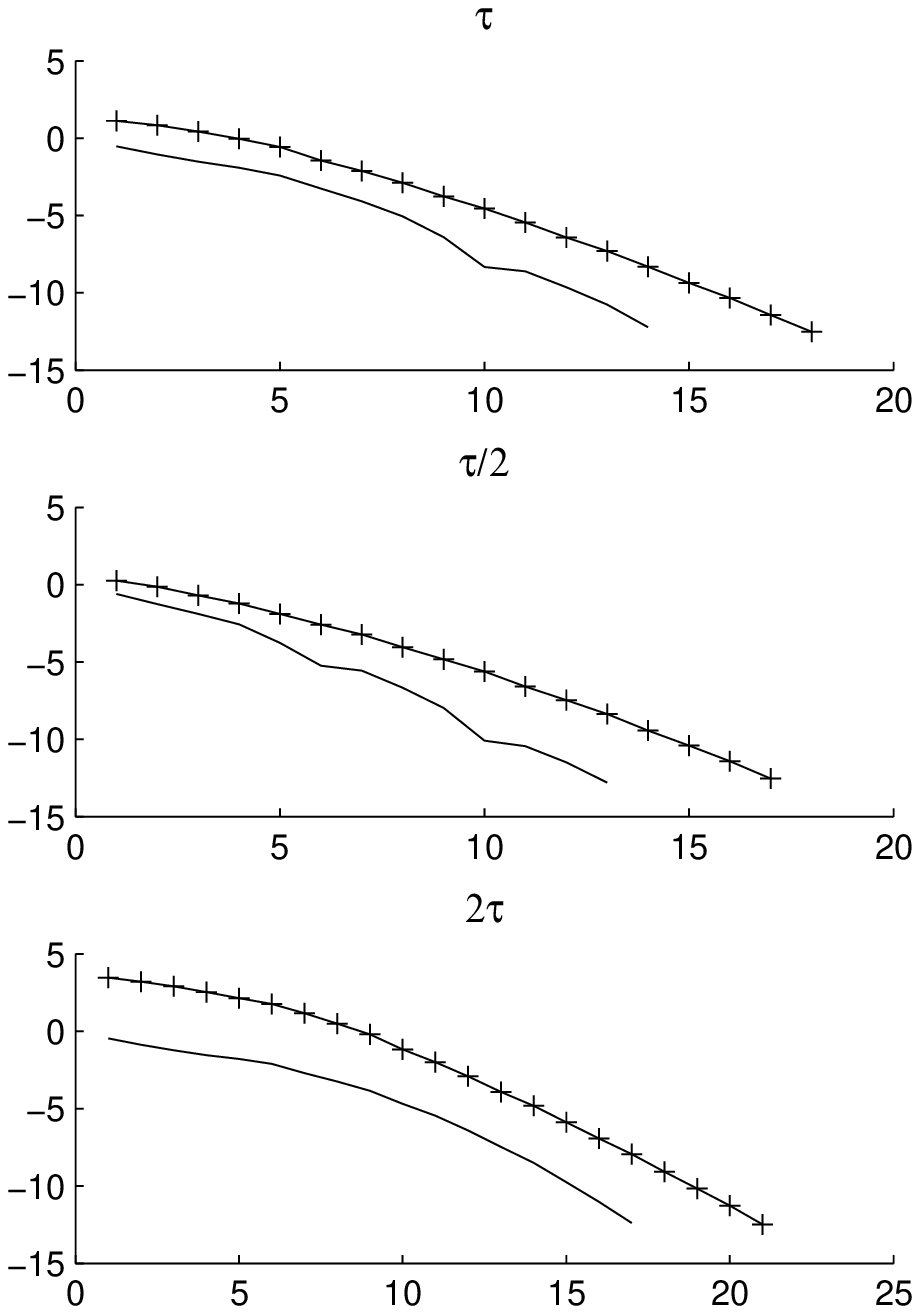}{\special%
{language "Scientific Word";type "GRAPHIC";maintain-aspect-ratio
TRUE;display "USEDEF";valid_file "F";width 3.4272in;height 5.5192in;depth
0in;original-width 4.2505in;original-height 6.8649in;cropleft "0";croptop
"1";cropright "1";cropbottom "0";filename 't01.eps';file-properties "XNPEU";}%
}
\end{center}

The robustness of the method with respect to the choice of $\tau $ is maybe
the most important aspect concerning its use inside an exponential
integrator. We want to give here some practical suggestions assuming to use
a sparse factorization technique to solve the linear systems with $I-\delta
L $, that, computationally, has to be considered the heaviest part of the
method.

\begin{enumerate}
\item Working in much smaller dimension compute $\theta $ and use the
generalized residual to estimate the initial $\tau _{opt}$.

\item For nonlinear problems, interpreting $L$ as the Jacobian of the system
(\cite{CO}, \cite{Tok}), it is necessary to introduce some strategies in
order to reduce as much as possible the number of updates of $L$ during the
integration, since each update would also require to update the
factorization. As for exponential W-method (see \cite{Holuse}, \cite{Nov}),
we suggest, whenever it is possible, to work with a time-lagged Jacobian and
hence to introduce the necessary order conditions in order to preserve the
theoretical order.

\item Using a quasi-constant step-size strategy (without Jacobian update)
allows to keep the factorization of $I-\delta L$ constant for a certain
number of steps. Whenever it is necessary to update the stepsize $%
h_{old}\rightarrow h_{new}$ without changing the Jacobian, if we want to
keep the previous factorization of $I-\delta _{old}L$ we just need to
consider the ratio $\tau =h_{new}/\delta _{old}$. If (indicatively) it is
bigger than $2\tau _{opt}$ or smaller than $\tau _{opt}/2$ (cf. Figure 4 and
5), where $\tau _{opt}$ arises from a previous analysis of the generalized
residual, then we need to update the factorization (cf. again \cite{Nov}),
otherwise we can keep the previous one. In this phase, however, one can even
considers other strategies to define suitably the window of admissible
values of $\tau $ around $\tau _{opt}$, taking into account of the local
accuracy required by the integrator, the norm of $v$, etc.
\end{enumerate}

\section{The superlinear decay of $\prod\nolimits_{i=1}^{m}h_{i+1,i}$}

Looking carefully at Figure 5 we notice that while the analysis in smaller
dimension suggested to take $\tau =15/\cos \theta $ for reaching the desired
tolerance in exactly $14$ iterations the method is unexpectedly a bit faster
taking $\tau _{1}=\tau /2$ (second picture). The analysis was correct
because in larger dimension the method actually achieves the tolerance in $%
14 $ iterations (first picture). In order to understand the reason of this
behavior, we need to remember that the definition of $\tau _{opt}=(m+k)/\cos
\theta $ given at the end of Section 4 was based on the assumption that $%
\prod\nolimits_{i=1}^{m}h_{i+1,i}$ is independent of $\delta ,$ but this is
not true. In what follows we try to provide a more accurate analysis
studying the decay of $\prod\nolimits_{i=1}^{m}h_{i+1,i}$.

We denote by $\sigma _{j}$, $j\geq 1$, the singular values of $Z$. Moreover
we denote by $\lambda _{j}$, $j\geq 1$ the eigenvalues of $Z$ and assume
that $\left\vert \lambda _{j}\right\vert \geq \left\vert \lambda
_{j+1}\right\vert $ for $j\geq 1$. We have the following result (cf. \cite%
{Ne} Theorem 5.8.10).

\begin{theorem}
\label{nev}Assume that $1\notin \sigma (Z)$ and%
\begin{equation}
\sum_{j\geq 1}\sigma _{j}^{p}<\infty \text{ for a certain }0<p\leq 1\text{. }
\label{psum}
\end{equation}%
Let $p_{m}(z)=\prod\nolimits_{i=1}^{m}(z-\lambda _{i})$. Then%
\begin{equation}
\left\Vert p_{m}(Z)\right\Vert \leq \left( \frac{\eta ep}{m}\right) ^{m/p},
\label{bn}
\end{equation}%
where%
\begin{equation*}
\eta \leq \frac{1+p}{p}\sum_{j\geq 1}\sigma _{j}^{p}.
\end{equation*}
\end{theorem}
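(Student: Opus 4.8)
The plan is to reduce to Schur form, isolate a single product-of-singular-values estimate, and then turn the summability hypothesis \eqref{psum} into the stated super-geometric decay via Stirling's formula. First I would write $Z=Q(D+N)Q^{H}$ in Schur form, with $D=\mathrm{diag}(\lambda_{1},\lambda_{2},\dots)$ ordered so that $|\lambda_{j}|\geq|\lambda_{j+1}|$ and $N$ strictly upper triangular; since $\|p_{m}(Z)\|=\|p_{m}(D+N)\|$ this costs nothing. Writing $D+N$ in $2\times2$ block form with a leading block $A$ of size $m$, that block is itself upper triangular with diagonal $\lambda_{1},\dots,\lambda_{m}$, so its characteristic polynomial is exactly $p_{m}$ and by Cayley--Hamilton $p_{m}(A)=0$. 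Equivalently, $p_{m}(D+N)$ annihilates the span of the first $m$ coordinate vectors, which is the structural fact that forces $p_{m}(Z)$ to be small.

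The key step, and the main obstacle, is the operator-norm estimate
\begin{equation*}
\|p_{m}(Z)\|\leq 2^{m}\prod_{j=1}^{m}\sigma_{j}.
\end{equation*}
In the normal case this is elementary: $p_{m}(Z)$ is normal with $\|p_{m}(Z)\|=\max_{j>m}\prod_{i=1}^{m}|\lambda_{j}-\lambda_{i}|\leq\max_{j>m}\prod_{i=1}^{m}2|\lambda_{i}|=2^{m}\prod_{i=1}^{m}\sigma_{i}$, using $|\lambda_{j}|\leq|\lambda_{i}|$ for $j>m\geq i$ and $\sigma_{i}=|\lambda_{i}|$. For general $Z$ the non-normal part (the strictly triangular $N$) has to be controlled, and here the tool I would use is Weyl's multiplicative inequality $\prod_{j=1}^{m}|\lambda_{j}|\leq\prod_{j=1}^{m}\sigma_{j}$ together with the annihilation structure above, which lets one trade the eigenvalue product for the singular-value product at the cost of the factor $2^{m}$. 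Pinning down this trade for non-self-adjoint $Z$ is where the work concentrates.

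Granting that estimate, the remainder is bookkeeping. Monotonicity of the singular values and \eqref{psum} give $j\,\sigma_{j}^{p}\leq\sum_{i=1}^{j}\sigma_{i}^{p}\leq S$, where $S:=\sum_{j\geq1}\sigma_{j}^{p}$, hence $\sigma_{j}\leq(S/j)^{1/p}$ and $\prod_{j=1}^{m}\sigma_{j}\leq S^{m/p}/(m!)^{1/p}$. Combining with the key estimate and $m!\geq(m/e)^{m}$ yields
\begin{equation*}
\|p_{m}(Z)\|\leq\bigl(2^{p}S\bigr)^{m/p}\Bigl(\frac{e}{m}\Bigr)^{m/p}=\Bigl(\frac{2^{p}S\,e}{m}\Bigr)^{m/p}.
\end{equation*}
Finally I would set $\eta:=2^{p}S/p$, so the bound reads $\bigl(\eta ep/m\bigr)^{m/p}$ as claimed; and since $2^{p}\leq1+p$ for $0<p\leq1$ (convexity of $2^{p}$ with equality at the endpoints), $\eta\leq\tfrac{1+p}{p}\sum_{j\geq1}\sigma_{j}^{p}$, which is the stated bound on $\eta$. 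The hypothesis $1\notin\sigma(Z)$ enters only as the regularity condition inherited from the general framework that makes the construction well posed; it does not affect the chain of inequalities above.
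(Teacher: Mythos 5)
You have correctly identified that everything hinges on a single operator-norm estimate, but that estimate is precisely the content of the theorem, and your sketch does not prove it. (Note also that the paper itself offers no proof to compare against: the statement is imported verbatim from \cite{Ne}, Theorem 5.8.10, so a blind proof must carry the full operator-theoretic burden.) Your reduction to $\left\Vert p_{m}(Z)\right\Vert \leq 2^{m}\prod_{j=1}^{m}\sigma _{j}$ is established only for normal $Z$, and the tools you propose for the general case do not combine as suggested. The normal-case argument rests on $\sigma _{i}=\left\vert \lambda _{i}\right\vert $ and the termwise bound $\left\vert \lambda _{j}-\lambda _{i}\right\vert \leq 2\left\vert \lambda _{i}\right\vert \leq 2\sigma _{i}$; for non-normal $Z$ the individual inequality $\left\vert \lambda _{i}\right\vert \leq \sigma _{i}$ is false (for the block $\begin{pmatrix}\epsilon & 1\\ 0 & \epsilon \end{pmatrix}$ one has $\left\vert \lambda _{2}\right\vert =\epsilon $ while $\sigma _{2}\approx \epsilon ^{2}$), and only the multiplicative form $\prod_{i\leq k}\left\vert \lambda _{i}\right\vert \leq \prod_{i\leq k}\sigma _{i}$ of Weyl's inequality survives. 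Consequently the eigenvalue product cannot be traded for the singular-value product factor by factor: the intermediate inequality $\prod_{i\leq m}(\sigma _{i}+\left\vert \lambda _{i}\right\vert )\leq 2^{m}\prod_{i\leq m}\sigma _{i}$ that such a trade would yield already fails for the same $2\times 2$ example ($\approx \epsilon $ versus $4\epsilon ^{2}$). Likewise, the Schur/Cayley--Hamilton observation that $p_{m}(Z)$ annihilates the leading $m$-dimensional invariant subspace is correct, but if you try to telescope the product through the chain of Schur compressions, the factors you obtain involve the norm of $Z$ restricted to subspaces of codimension $j-1$, and by the min--max characterization of singular values these norms are bounded \emph{below} by $\sigma _{j}$, not above. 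So the key inequality remains unproved, and the route you indicate would fail if executed literally.

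The bookkeeping that follows is correct and can be kept: monotonicity and (\ref{psum}) give $\sigma _{j}\leq (S/j)^{1/p}$, hence $\prod_{j\leq m}\sigma _{j}\leq S^{m/p}(m!)^{-1/p}$, the Stirling bound $m!\geq (m/e)^{m}$ produces the factor $(e/m)^{m/p}$, and $2^{p}\leq 1+p$ on $(0,1]$ by convexity delivers the stated bound on $\eta $; you are also right that $1\notin \sigma (Z)$ plays no role in the inequality itself. To close the argument you need the genuine lemma behind Nevanlinna's theorem --- an estimate of the type $\left\Vert p_{m}(Z)\right\Vert \leq \prod_{j\leq m}(\sigma _{j}+\left\vert \lambda _{j}\right\vert )$ proved by a more delicate argument, followed by Weyl's majorant theorem $\sum_{j}\left\vert \lambda _{j}\right\vert ^{p}\leq \sum_{j}\sigma _{j}^{p}$ applied at the level of $p$-th powers rather than termwise --- or else a citation, which is all the paper provides.
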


As already shown in Section 4%
\begin{equation*}
\prod\nolimits_{i=1}^{m}h_{i+1,i}\leq \left\Vert p_{m}(Z)v\right\Vert
\end{equation*}%
for each monic polynomial $p_{m}$ of exact degree $m$ (see \cite{Trefe} p.
269), so that Theorem \ref{nev} reveals that the rate of decay of $%
\prod\nolimits_{i=1}^{m}h_{i+1,i}$ is superlinear and depends on the $p$%
-summability of the singular values of $Z$. We remark moreover that an
almost equal bound has been obtained in \cite{Han} studying the convergence
of the smallest Ritz value of the Lanczos process for self-adjoint compact
operators.

In practice, the use of (\ref{bn}) requires the knowledge of $p$ and a bound
for $\eta $, that is, information about the singular values of the operator $%
Z$. As a model problem we consider again the operator $L$ defined by (\ref%
{lu}) with $c=0$, whose eigenvalues are $\left( j\pi \right) ^{2}$, $j\geq 1$%
, so that the eigenvalues of $Z$ are given by $\lambda _{j}=1/(1+\delta
\left( j\pi \right) ^{2})$. In this case (\ref{psum}) holds for $1/2<p\leq 1$
so that $Z$ can be referred to as a \emph{trace class} operator (see again 
\cite{Ne}). Hence, taking for instance $p=1$ we have%
\begin{eqnarray}
\sum_{i\geq 1}\sigma _{i}^{p} &\leq &\frac{1}{\sqrt{\delta }}\left( \frac{1}{%
2}-\frac{\arctan \left( \sqrt{\delta }\pi \right) }{\pi }\right) ,  \notag \\
&=&\frac{1}{\sqrt{\delta }\pi }\arctan \left( \frac{1}{\sqrt{\delta }\pi }%
\right) ,  \notag \\
&\leq &\frac{1}{2\sqrt{\delta }},  \label{tb}
\end{eqnarray}%
and so%
\begin{equation}
\prod\nolimits_{i=1}^{m}h_{i+1,i}\leq \left\Vert p_{m}(Z)v\right\Vert \leq
\left( \frac{e}{\sqrt{\delta }m}\right) ^{m}.  \label{pmh}
\end{equation}%
The bound (\ref{pmh}) reveals that the rate of decay depends on the choice
of $\delta $ and then on $h$. For large values of $\delta $, say $\delta
\geq 1$, the bound (\ref{tb}) can be heavily improved exploiting the
properties of the $\arctan $ function and the convergence is extremely fast.
The following proposition states a general superlinear bound that can be
used when $L$ is an elliptic differential operator of the second order, so
with singular values growing like $j^{2}$. The proof is straightforward
since we just require to bound $\sum_{j\geq 1}\sigma _{j}^{p}$, and apply (%
\ref{bn}) with $p=1$.

\begin{proposition}
Let $L$ be an elliptic differential operator of the second order. Then there
exists a constant $C$ such that%
\begin{equation}
\prod\nolimits_{i=1}^{m}h_{i+1,i}\leq \left( \frac{C}{\sqrt{\delta }m}%
\right) ^{m}.  \label{hh}
\end{equation}
\end{proposition}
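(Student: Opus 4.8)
The plan is to apply Theorem \ref{nev} with $p=1$, exactly as in the model computation (\ref{tb})--(\ref{pmh}); the only genuinely new ingredient is a bound for the trace $\sum_{j\ge 1}\sigma_j$ of $Z$ that is valid for a general second-order elliptic $L$. First I would record the two facts I am allowed to use. The interpolation inequality $\prod_{i=1}^{m}h_{i+1,i}\le\|p_m(Z)v\|$ holds for every monic $p_m$ of exact degree $m$, and since $\|v\|=1$ this gives $\prod_{i=1}^{m}h_{i+1,i}\le\|p_m(Z)\|$. The bound (\ref{bn}) with $p=1$ reads $\|p_m(Z)\|\le(\eta e/m)^{m}$ with $\eta\le 2\sum_{j\ge 1}\sigma_j$. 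The hypothesis $1\notin\sigma(Z)$ required by Theorem \ref{nev} is immediate, because $\sigma(Z)=\chi(\sigma(L))$ and $\chi(\lambda)=1$ only at $\lambda=0$, which is not in $\sigma(L)$ since $F(L)\subset\operatorname{int}(S_\theta)\subset\mathbb{C}^-$.

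Second, I would convert the ellipticity hypothesis into a decay rate for the singular values of $Z=(I-\delta L)^{-1}$. Because $L$ is second-order elliptic its spectral asymptotics force the eigenvalues to grow like $j^2$, and correspondingly $(I-\delta L)^{-1}$ behaves as a second-order smoothing operator whose singular values obey $\sigma_j\le c_1/(1+\delta c_2 j^2)$ for constants $c_1,c_2>0$ independent of $\delta$. This is the step I expect to be the main obstacle. In the self-adjoint case one simply has $\sigma_j=1/(1+\delta\mu_j)$ with $\mu_j\sim (cj)^2$ and the estimate is transparent --- this is precisely the case $c=0$ carried out in (\ref{tb}) --- but in the non-self-adjoint setting one must invoke elliptic regularity to guarantee that the singular values of the resolvent-type operator $Z$, and not merely its eigenvalues, decay like $1/(\delta j^2)$.

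Third, granted this decay the summability (\ref{psum}) with $p=1$ reduces to a one-line integral comparison,
\begin{equation*}
\sum_{j\ge 1}\sigma_j\le c_1\sum_{j\ge 1}\frac{1}{1+\delta c_2 j^2}\le c_1\int_{0}^{\infty}\frac{dx}{1+\delta c_2 x^2}=\frac{c_1\pi}{2\sqrt{\delta c_2}}=\frac{C'}{\sqrt{\delta}},
\end{equation*}
so that $\eta\le 2C'/\sqrt{\delta}$. Inserting this into $\|p_m(Z)\|\le(\eta e/m)^m$ and combining with $\prod_{i=1}^{m}h_{i+1,i}\le\|p_m(Z)\|$ yields $\prod_{i=1}^{m}h_{i+1,i}\le\bigl(2C'e/(\sqrt{\delta}\,m)\bigr)^m$, which is (\ref{hh}) with $C:=2C'e$. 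The remaining effort is pure bookkeeping of constants; the only substantive point is the second step, where the $j^2$ growth of the spectrum is turned into the $1/\sqrt{\delta}$ scaling of the trace.
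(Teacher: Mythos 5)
Your proposal is correct and follows essentially the same route as the paper, which states only that one must bound $\sum_{j\geq 1}\sigma_{j}^{p}$ and apply (\ref{bn}) with $p=1$, exactly as in the model computation (\ref{tb})--(\ref{pmh}). You rightly flag the one point the paper leaves implicit --- that ellipticity must be used to control the \emph{singular} values (not just the eigenvalues) of $Z$ in the non-self-adjoint case, and that the asserted $j^{2}$ growth is really a one-dimensional (or trace-class) assumption --- but your overall argument and bookkeeping match the intended proof.
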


This proposition can easily be generalized to operator of order $s\geq 1$,
exploiting \cite{Ne} Corollary 5.8.12 in which the author extends Theorem %
\ref{nev} for $p>1$. Anyway, this is beyond the purpose of this section.

From a practical point of view, formula (\ref{hh}) is almost useless since
too much information on $L$ would be required. On the other side, it is
fundamental to understand the dependence on $\delta $. Setting as usual $%
\tau =h/\delta $ and putting the corresponding bound (\ref{hh}) in Theorem %
\ref{pro1} (formula (\ref{fe2})), we easily find that the theoretical
optimal value for $\tau $ is obtained seeking for the minimum of%
\begin{equation*}
\frac{e^{\tau \cos \theta -m-k-1}}{\tau ^{m+k}}\left( \frac{C\sqrt{\tau }}{%
\sqrt{h}m}\right) ^{m}
\end{equation*}%
with respect to $\tau $, that is,%
\begin{equation*}
\tau _{opt}=\frac{m+2k}{2\cos \theta }.
\end{equation*}%
This new value, less than $\left( m+k\right) /\cos \theta $, explains our
considerations about Figure 5 given at the beginning of this section.

We need to point out that since the choice of $\tau _{opt}$ is independent
of $C$ and $h$, formula (\ref{hh}) is quite coarse for small values of $h$
and not able to catch the fast decay of $\prod\nolimits_{i=1}^{m}h_{i+1,i}$.
In any case, if an estimate of $C$ is available an a-priori bound for the RD
Arnoldi method can be obtained taking%
\begin{equation*}
\prod\nolimits_{i=1}^{m}h_{i+1,i}\leq \min \left\{ \left( \frac{C\sqrt{\tau }%
}{\sqrt{h}m}\right) ^{m},2\left( \frac{1}{2(2-\nu )}\right) ^{m}\right\} ,
\end{equation*}%
(cf. (\ref{bh})). Consequently we argue that%
\begin{equation*}
\frac{m+2k}{2\cos \theta }\leq \tau _{opt}\leq \frac{m+k}{\cos \theta },
\end{equation*}%
with $\tau _{opt}$ close to $\left( m+2k\right) /\left( 2\cos \theta \right) 
$ for $h$ large and to $\left( m+k\right) /\cos \theta $ for $h$ small.

\section{Conclusions}

In this paper we have tried to provide all the necessary information to
employ the RD Arnoldi method as a tool for solving parabolic problems with
exponential integrators. The little number of codes available in literature,
and consequently, the little number of comparisons with classical solvers is
a source of skepticism about the practical usefulness of this kind of
integrators. Indeed, with respect to the most powerful classical methods for
stiff problems, the computation of a large number of matrix functions,
generally performed with a polynomial method, is still representing a
drawback because of the computational cost. The use of polynomial methods
for these computations may even be considered inadequate whenever we assume
to work with an arbitrarily sharp discretization of the operator, since this
would result in a problem of polynomial approximation in arbitrarily large
domains. For these reasons, the use of rational approximations as the one
here presented, should be considered a valid alternative because of the fast
rate of convergence and the mesh independence property, provided that we are
able to exploit suitably the robustness of the method with respect to the
choice of the poles, as explained in Section 8 for our case

\begin{acknowledgement}
The author is grateful to Igor Moret and Marco Vianello for many helpful
discussions and suggestions.
\end{acknowledgement}

\end{document}